\newcommand{\C}{\mathbb{C}}
\newcommand{\N}{\mathbb{N}}
\newcommand{\Q}{\mathbb{Q}}
\newcommand{\R}{\mathbb{R}}
\newcommand{\Z}{\mathbb{Z}}
\newcommand{\cO}{\mathcal{O}}
\DeclareSymbolFont{cyrletters}{OT2}{wncyr}{m}{n}
\DeclareMathSymbol{\sha}{\mathalpha}{cyrletters}{"58}
\newcommand{\eps}{\varepsilon}
\newlength{\strutheight}
\newtheorem{theorem}{Theorem}[section]
\newtheorem{lemma}[theorem]{Lemma}
\newtheorem{corollary}[theorem]{Corollary}
\newtheorem{figurecap}[theorem]{Figure}
\author{Alex Cowan\\cowan@math.harvard.edu}
\title{The distribution of multiples of real points on an elliptic curve.}
\date{}
\begin{document}
\maketitle
\begin{abstract}
\noindent Given an elliptic curve $E$ and a point $P$ in $E(\R)$, we investigate the distribution of the points $nP$ as $n$ varies over the integers, giving bounds on the $x$ and $y$ coordinates of $nP$ and determining the natural density of integers $n$ for which $nP$ lies in an arbitrary open subset of $\R^2$. Our proofs rely on a connection to classical topics in the theory of Diophantine approximation.
\end{abstract}
\section{Introduction}
\noindent
Let $E: y^2 = 4x^3 - g_2x - g_3$ be an elliptic curve with $g_2,g_3 \in \R$, and suppose that $P$ is an element of infinite order in the group $E(\R)$. In this paper we investigate the statistics of the coordinates $x(nP), y(nP) \in \R$ of $nP$ for $n \in \Z$. The set of points $(x,y) \in \R^2$ which satisfy the equation for $E$ form either one or two connected subsets of $\R^2$, depending on whether the polynomial $4x^3 - g_2x - g_3$ has one or three real roots. In the case where $4x^3 - g_2x - g_3$ has three real roots, the coordinates of points making up one of the connected subsets are bounded, while in the other the coordinates are unbounded. In this case we will say that $E(\R)$ has two connected components, and we will refer to them as the ``bounded component'' and ``unbounded component''. If instead $4x^3 - g_2x - g_3$ has only one real root, then we will say that $E(\R)$ has only one component, we will refer to it as the ``unbounded component''.\\
\\
Let $\omega$ be the holomorphic differential $\frac{dx}{y}$ on $E$. We will say that the \textit{periods of $E$} are any two complex numbers $\omega_1$ and $\omega_2$ with the property that for any closed loop $C$ in $\C$, there exist integers $m$ and $n$ such that $\int_{C}\omega = m\omega_1 + n\omega_2$. As described in \cite{silverman1}, there are contours $C_1$ and $C_2$ which enclose exactly two of the three roots of $4x^3 - g_2x - g_3$ such that $\omega_1 = \int_{C_1}\omega$ and $\omega_2 = \int_{C_2}\omega$. Moreover, it is always possible for $\omega_1$ and $\omega_2$ to be chosen such that $\omega_1 \in \R_{>0}$, $\text{Im}(\omega_2) > 0$, and $\text{Re}(\omega_2) = 0$ (if $E$ has two connected components) or $\frac{1}{2}\omega_1$ (if $E$ has only one connected component), as described in algorithm 7.4.7 of \cite{cohen}.\\
\\
In section \ref{growthsection}, we prove theorems which explain how large the coordinates of $nP$ get as a function of $n$:
\begin{figure}[H]
\begin{minipage}[t]{0.6\linewidth}
\begin{adjustwidth}{0em}{1em}
\begin{theorem}\label{absbound}
Suppose that $E/\C$ has periods $\omega_1$ and $\omega_2$, chosen such that $\omega_1 \in \R_{>0}$ and $\text{\emph{Im}}(\omega_2) > 0$. Then for every point $P$ of infinite order in the unbounded component of $E(\R)$, there exist infinitely many $n$ such that
  $$x(nP) > \frac{5}{\omega_1^2}n^2 + \cO(n^{-2}) \quad\quad\text{and}\quad\quad y(nP) > \frac{2\cdot 5^{\frac{3}{2}}}{\omega_1^3}n^3 + \cO(n^{-1}).$$
  If $P$ is instead a point of infinite order on the bounded component of $E(\R)$ (in the case where $E(\R)$ has two connected components), then there exist infinitely many $n$ such that
  $$x(nP) > \frac{5}{4\omega_1^2}n^2 + \cO(n^{-2}) \quad\quad\text{and}\quad\quad y(nP) > \frac{5^{\frac{3}{2}}}{4\omega_1^3}n^3 + \cO(n^{-1}).$$
  The implied constants depend only on $E$.\\
\end{theorem}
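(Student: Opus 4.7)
My plan is to use the Weierstrass uniformization $\C/\Lambda \to E(\C)$, $u \mapsto (\wp(u), \wp'(u))$, where $\Lambda = \omega_1 \Z + \omega_2 \Z$, and then reduce the problem to the classical Diophantine approximation of $u/\omega_1$ by rationals.

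First I would realize $P$ as some $u \in \C$, so that $x(nP) = \wp(nu)$ and $y(nP) = \wp'(nu)$. If $P$ lies on the unbounded component of $E(\R)$ I may take $u \in \R$; if $E(\R)$ has two components and $P$ lies on the bounded one (so $\omega_2 \in i\R$), I may take $u = \tfrac{1}{2}\omega_2 + t$ with $t \in \R$. Since $\wp$ has its only pole (modulo $\Lambda$) at $0$, making $x(nP)$ large means forcing $nu$ to lie close to a point of $\Lambda$. In the unbounded case this is simply $\|nu/\omega_1\|$ small; in the bounded case, for even $n = 2m$ we have $nu = m\omega_2 + 2mt \equiv 2mt \pmod{\Lambda}$, so the problem reduces to $\|2mt/\omega_1\|$ small. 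Note that odd multiples of a bounded-component point remain on the bounded component (where $\wp$ is bounded), so restricting to even $n$ is forced.

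Next, since $P$ has infinite order, $u/\omega_1$ (respectively $t/\omega_1$) is irrational, so Hurwitz's theorem yields infinitely many integers $n, p$ with $|nu/\omega_1 - p| < 1/(\sqrt{5}\,n)$ in the first case, and, applied to $2t/\omega_1$, infinitely many $m, p$ with $|2mt/\omega_1 - p| < 1/(\sqrt{5}\,m)$ in the second. Setting $n = 2m$ in the second case, and letting $z_n = nu - p\omega_1$ in the first and $z_n = nu - p\omega_1 - m\omega_2$ in the second, I obtain $|z_n| < \omega_1/(\sqrt{5}\,n)$ or $|z_n| < 2\omega_1/(\sqrt{5}\,n)$ respectively, with $\wp(nu) = \wp(z_n)$ and $\wp'(nu) = \wp'(z_n)$ by $\Lambda$-periodicity.

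Finally, the Laurent expansions $\wp(z) = z^{-2} + \cO(z^2)$ and $\wp'(z) = -2z^{-3} + \cO(z)$ near the origin (with constants depending only on $E$) convert the above bounds on $|z_n|$ into $x(nP) > 5n^2/\omega_1^2 + \cO(n^{-2})$ or $x(nP) > 5n^2/(4\omega_1^2) + \cO(n^{-2})$ as claimed; the $y$-coordinate bound is completely analogous, with the sign of $y(nP)$ being adjusted, if necessary, by replacing $n$ with $-n$ (which negates $\wp'$). The one delicate point, essentially the only obstacle, is the factor of $4$ in the bounded-component case, which reflects the fact that being forced to restrict Hurwitz's theorem to even denominators costs a factor of $2$ in the approximation quality, hence a factor of $4$ in the resulting lower bound on $x(nP)$.
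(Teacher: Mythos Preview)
Your proof is correct and follows essentially the same route as the paper: Weierstrass uniformization, the Laurent expansion of $\wp$ near $0$, and Hurwitz's theorem applied to $z_P/\omega_1$, with the bounded-component case handled by passing to $2P$ (your restriction to even $n=2m$ and application of Hurwitz to $2t/\omega_1$ is exactly the paper's ``apply the argument to $2P$''). Your explicit remark about fixing the sign of $y(nP)$ via $n\mapsto -n$ is a detail the paper glosses over.
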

\end{adjustwidth}
\end{minipage}
\begin{minipage}[t]{0.4\linewidth}
\centering
\includegraphics[
                 width=0.95\linewidth, height = 5.7cm, valign=t,right]{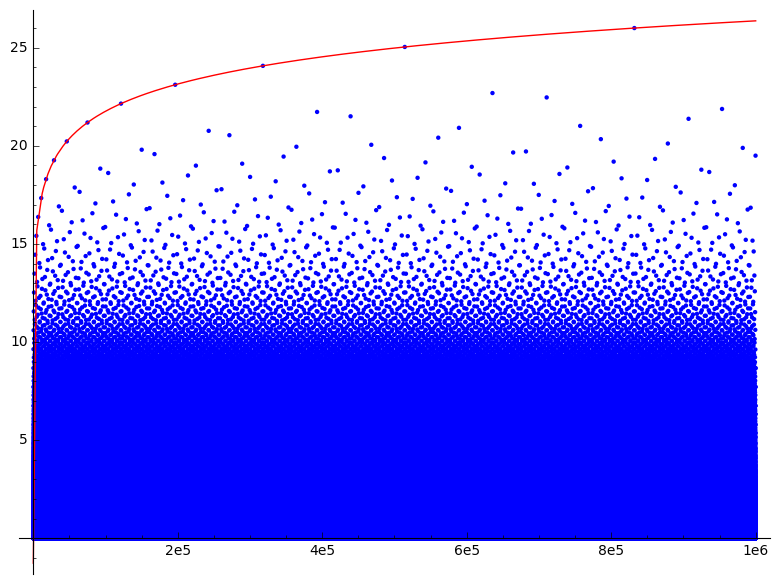}
\begin{adjustwidth}{1em}{0em}
  {\small \begin{figurecap} $\{\log(x(nP)+2)\,:\,1 < n < 10^6\}$ for $P \approx (-0.406,0.966)$ on $E: y^2 = x^3 + 1$, with the lower bound of theorem \ref{absbound} in red.\end{figurecap}}
\end{adjustwidth}
\end{minipage}%
\end{figure}

\begin{theorem}\label{typical}
Let $\psi$ be a non-decreasing function from $\N$ to $\R_{>0}$. If $\sum\limits_{n=1}^\infty \psi(n)^{-1}$ diverges, then for all points $P$ in $E(\R)$ except for a set of points of Lebesgue measure zero, there exist infinitely many positive integers $n$ such that
  $$x(nP) > \psi(n)^2\quad\quad\text{and}\quad\quad y(nP) > \psi(n)^3,$$
  while if $\sum\limits_{n=1}^\infty \psi(n)^{-1}$ converges, then the set of points $P$ in $E(\R)$ for which there exist infinitely many such $n$ has measure zero.
\end{theorem}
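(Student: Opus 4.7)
The strategy is to use the Weierstrass parametrization $\wp \colon \C/\Lambda \to E(\C)$ with $\Lambda = \omega_1\Z + \omega_2\Z$ to convert the question into a Diophantine approximation problem on $\R/\Z$, and then invoke Khintchine's theorem.

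First, I identify the unbounded component of $E(\R)$ with $\R/\omega_1\Z$ via $P \leftrightarrow t_P$, where $x(P) = \wp(t_P)$ and $y(P) = \wp'(t_P)$; set $\alpha_P := t_P/\omega_1 \in \R/\Z$. For $P$ on the bounded component (when it exists), write $t_P = \omega_2/2 + \omega_1 \alpha_P$; then $n t_P \equiv n\omega_1 \alpha_P \pmod \Lambda$ for even $n$, while for odd $n$ the point $nP$ lies on the bounded component, where the coordinates are bounded and so contribute nothing to the conclusion for large $n$. Away from the identity $O$ the correspondence $P \leftrightarrow \alpha_P$ is a smooth diffeomorphism, so it identifies Lebesgue null sets on $E(\R)$ with Lebesgue null sets on $\R/\Z$.

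Next, the Laurent expansions $\wp(z) = z^{-2} + O(z^2)$ and $\wp'(z) = -2z^{-3} + O(z)$ at $z = 0$ show that on the real axis, the pair of conditions $x(nP) > \psi(n)^2$ and $y(nP) > \psi(n)^3$ both hold (for $n$ sufficiently large) if and only if $n \alpha_P$ lies within $c_E/\psi(n)$ of an integer on a prescribed side, dictated by the sign of $\wp'$ near the pole, for some constant $c_E > 0$ depending only on $E$. The theorem thus reduces to describing, as a function of $\psi$, the Lebesgue measure of the set of $\alpha \in \R/\Z$ for which this one-sided approximation condition holds for infinitely many $n$.

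For the convergence half this is immediate from Borel-Cantelli: each of the events ``$n\alpha$ lies within $c_E/\psi(n)$ of $\Z$'' has Lebesgue measure $2c_E/\psi(n)$, and summability of $1/\psi(n)$ forces only a null set of $\alpha$ to lie in infinitely many of them. For the divergence half, since $\psi$ is non-decreasing, $\varphi(n) := c_E/\psi(n)$ is non-increasing, and a one-sided version of Khintchine's theorem yields the conclusion for almost every $\alpha$. The main subtlety is this one-sided refinement, since the classical Khintchine theorem concerns the two-sided quantity $\|n\alpha\|$; one way to handle it is to apply the two-sided version with $\psi/2$ and then invoke Weyl equidistribution of $(n\alpha)$ to show that the two sides of $\Z$ are each approached with positive lower density.
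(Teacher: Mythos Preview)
Your proposal is correct and follows the same route as the paper: Weierstrass parametrization, Laurent expansion of $\wp$ and $\wp'$ near the pole to convert the coordinate inequalities into a bound on $\{n\alpha_P\}$, then Khinchin's theorem (the paper's Lemma~\ref{khinchin}); the paper compresses all of this into a single sentence pointing back to the proof of Theorem~\ref{absbound}. You are in fact more careful than the paper on two points it leaves implicit --- that the diffeomorphism $P \leftrightarrow \alpha_P$ carries null sets to null sets, and that the requirement $y(nP) > \psi(n)^3$ (rather than $|y(nP)|$) is a one-sided condition on $n\alpha_P \bmod 1$.

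One small caveat on that last point: your proposed fix via Weyl equidistribution is not quite sufficient as stated, since equidistribution of the full sequence $(n\alpha)$ does not by itself control on which side of $\Z$ the sparse subsequence with $\|n\alpha\| < c_E/\psi(n)$ falls. The cleanest repair is to note that the continued-fraction convergents of $\alpha$ satisfy $q_k\alpha - p_k$ with alternating sign, so whenever the two-sided Khinchin condition holds for infinitely many $n$ it automatically holds infinitely often on each side; alternatively one can invoke a $0$--$1$ law for the one-sided set together with the symmetry $\alpha \mapsto 1-\alpha$.
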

\begin{theorem}\label{arbitrarilybig}
  For any $E$ and any function $\psi:\N\to\R_{>0}$, there exists a point $P$ in $E(\R)$ such that, for infinitely many positive integers $n$,
  $$x(nP) > \psi(n)^2\quad\quad\text{and}\quad\quad y(nP) > \psi(n)^3.$$
\end{theorem}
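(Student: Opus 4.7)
The plan is to reduce the theorem to a classical Liouville-style Diophantine approximation construction. Under the Weierstrass parameterization $\C/(\omega_1\Z + \omega_2\Z) \xrightarrow{\sim} E(\C)$ sending $z \mapsto (\wp(z), \wp'(z))$, the unbounded component of $E(\R)$ is parameterized by $z = t\omega_1$ with $t \in \R/\Z$. The Laurent expansions $\wp(z) = z^{-2} + \cO(z^2)$ and $\wp'(z) = -2z^{-3} + \cO(z)$ at the origin, combined with the symmetries $\wp(\omega_1 - z) = \wp(z)$ and $\wp'(\omega_1 - z) = -\wp'(z)$, imply that if $t$ is just below $1$, then both $\wp(t\omega_1) \sim ((1-t)\omega_1)^{-2}$ and $\wp'(t\omega_1) \sim 2((1-t)\omega_1)^{-3}$ are large and positive. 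Consequently, if $P$ corresponds to $t\omega_1$, then $x(nP) = \wp(nt\omega_1)$ and $y(nP) = \wp'(nt\omega_1)$ are simultaneously large and positive precisely when $\{nt\}$ approaches $1$ from below.

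It therefore suffices to construct a single $t \in (1/2, 1)$ with the property that $1 - \{nt\} < c/\psi(n)$ for infinitely many positive integers $n$, where $c$ is a constant depending only on $E$. I would do this by a nested interval argument: starting with a nonempty open interval $I_0 \subset (1/2, 1)$, and having chosen $I_k$, pick an integer $n_{k+1} > 1/|I_k|$. Then $I_k$ fully contains at least one open component of the set $\{t \in [0,1] : 1 - \{n_{k+1} t\} < c/\psi(n_{k+1})\}$, which is a union of $n_{k+1}$ equally spaced intervals each of length $c/(n_{k+1}\psi(n_{k+1}))$; take one such component to be $I_{k+1}$. The construction continues indefinitely, and by compactness $\bigcap_k \overline{I_k}$ is nonempty; any $t$ in the intersection, paired with $P = (\wp(t\omega_1), \wp'(t\omega_1))$, yields the theorem after verifying the numerical bounds on $x(nP)$ and $y(nP)$ at each $n = n_k$ from the asymptotics above.

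The main subtlety is the sign of $y(nP)$: both $\{nt\} \to 0^+$ and $\{nt\} \to 1^-$ force $|y(nP)|$ to be large, but only the latter gives $y(nP) > 0$, so the nested intervals must target neighborhoods of $1$ modulo $1$ rather than of $0$. Beyond this I expect no serious obstacle, as the error terms in the Laurent expansions are easily swallowed by the constant $c$, and the statement is monotone in $\psi$, so generous constants are permissible throughout.
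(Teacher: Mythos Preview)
Your proposal is correct and follows essentially the same route as the paper: reduce to constructing a real $\alpha$ with $n\alpha$ arbitrarily close to an integer for infinitely many positive $n$, and then take $P=(\wp(\alpha\omega_1),\wp'(\alpha\omega_1))$ on the unbounded component. The only difference is packaging: the paper invokes Lemma~\ref{conv} (Khinchin's theorem~22) as a black box, while you reprove that lemma by the standard nested-interval construction; your version is also slightly more careful in targeting the fractional part of $nt$ near $1^-$ rather than $0^+$ so that $y(nP)>0$, a sign issue the paper's one-line argument leaves implicit. Two small cleanups worth making when you write it out: take $n_{k+1}>2/|I_k|$ rather than $1/|I_k|$, so that a full component of the target set is guaranteed to lie inside $I_k$; and observe that the limit $t$ is irrational (hence $P$ has infinite order), since after replacing $\psi$ by $\max(\psi,n)$ the fractional parts $\{n_kt\}$ tend to $1$, which is impossible for rational $t$.
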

~\\
Variants of these theorems can be given for general $P \in E(\C)$, and not just for $P \in E(\R)$. For example,
\begin{theorem}\label{complexdirichlet}
  Let $P$ be a point in $E(\C)$ of infinite order. Then
  $$|x(nP)| \gg n \quad\quad\text{and}\quad\quad |y(nP)| \gg n^{\frac{3}{2}},$$
  where the implied constants depends only on $E$.
\end{theorem}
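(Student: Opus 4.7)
The plan is to work through the Weierstrass uniformization $\C/\Lambda \cong E(\C)$, where $\Lambda = \Z\omega_1 + \Z\omega_2$ and the isomorphism sends $[z]$ to $(\wp(z), \wp'(z))$. Lifting $P$ to a representative $z_0 \in \C$, the infinite-order hypothesis becomes $nz_0 \notin \Lambda$ for every $n \geq 1$, and the theorem's bounds on $x(nP) = \wp(nz_0)$ and $y(nP) = \wp'(nz_0)$ translate into the statement that, for infinitely many $n$, the point $nz_0$ lies within distance $\ll n^{-1/2}$ of the lattice $\Lambda$.

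The analytic core is a two-dimensional version of Dirichlet's approximation theorem, which I would prove by pigeonhole: for each $N \geq 1$, partition a fundamental parallelogram for $\Lambda$ into $N$ congruent sub-parallelograms of diameter $\ll N^{-1/2}$, and consider the reductions of $0, z_0, 2z_0, \ldots, Nz_0$ modulo $\Lambda$. Two of these, say $iz_0$ and $jz_0$ with $i > j$, must land in a common cell, yielding $n := i - j \in \{1, \ldots, N\}$ and $\lambda \in \Lambda$ with $|nz_0 - \lambda| \ll N^{-1/2} \leq n^{-1/2}$, the implied constant depending only on the shape of the fundamental parallelogram, and hence only on $E$. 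Letting $N \to \infty$, the infinite-order hypothesis (which ensures $\min_{\lambda \in \Lambda} |mz_0 - \lambda| > 0$ for each fixed $m$) forces the $n$ produced this way to range over an infinite set.

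The last step converts the lattice-approximation bound into bounds on $\wp$ and $\wp'$ through their Laurent expansions at the origin, $\wp(w) = w^{-2} + \cO(w^2)$ and $\wp'(w) = -2w^{-3} + \cO(w)$, whose implied constants come from $g_2, g_3$ and thus depend only on $E$; by $\Lambda$-periodicity the same expansions hold at every lattice point. Setting $w_n := nz_0 - \lambda_n$ with $|w_n| \ll n^{-1/2}$, the singular terms dominate for large $n$, giving
$$|x(nP)| = |\wp(w_n)| \gg |w_n|^{-2} \gg n \quad\text{and}\quad |y(nP)| = |\wp'(w_n)| \gg |w_n|^{-3} \gg n^{3/2}.$$

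I do not anticipate a significant obstacle: the argument is short, and its two ingredients (two-dimensional pigeonhole and the Laurent series of the Weierstrass functions) are both classical. The only points requiring care are checking that the implied constants depend only on $E$ and that the pigeonhole step genuinely yields infinitely many distinct values of $n$, both of which follow easily from the setup.
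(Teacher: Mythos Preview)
Your proposal is correct and follows essentially the same approach as the paper: reduce via the Weierstrass uniformization to showing that $nz_P$ lands within $\ll n^{-1/2}$ of $\Lambda$ for infinitely many $n$, obtain this from two-variable Dirichlet approximation, and then read off the bounds from the Laurent expansion of $\wp$ and $\wp'$ at the origin. The only difference is presentational: the paper invokes its simultaneous Dirichlet approximation lemma with $k=2$ applied to the two coordinates of $z_P$, whereas you carry out the equivalent pigeonhole argument directly on a fundamental parallelogram of $\C/\Lambda$.
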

~\\
The proofs of these theorems rely on the work of Hurwitz \cite{hur1891}, Khinchin \cite{khin64} \cite{khin26}, and Dirichlet (see \cite{hw08}, theorem 200) in the field of Diophantine approximation. The correspondence between results in Diophantine approximation and asymptotics for the size of the coordinates of $nP$ can be extended further.\\
~\\
In section \ref{distsection}, we investigate the full distribution of the $x$ and $y$ coordinates of $nP$. Let $\omega_1$ and $\omega_2$ be the periods of $E/\C$, chosen such that $\omega_1 \in \R_{>0}$ and $\text{Im}(\omega_2) > 0$. Let $\Lambda$ be the lattice in $\C$ with basis $\langle \omega_1, \omega_2\rangle$. Then $E/\C$ is parameterized by elements $z$ of $\C/\Lambda$ via $z \mapsto (\wp(z),\wp'(z))$, where
$$\wp(z) := \frac{1}{z^2} + \sum_{\substack{\lambda \in \Lambda\\\lambda\neq 0}}\left(\frac{1}{(z-\lambda)^2} - \frac{1}{\lambda^2}\right)$$
and $\wp'$ is the derivative $\frac{d\wp}{dz}$. We prove the following regarding the distribution of integer multiples of a fixed $P \in E(\R)$ in section \ref{distsection}, which states essentially that these integer multiples of $P$ are ``equidistributed'' in a sense which is clarified in section \ref{distsection}.

\begin{theorem}\label{dist}
  Let $P$ be a point of infinite order in $E(\R)$, and let $z_P$ be the preimage of $P$ under the parameterization $z \mapsto (\wp(z),\wp'(z))$. Let $\omega_1$ and $\omega_2$ be the periods of $E/\C$, chosen such that $\omega_1 \in \R_{>0}$ and $\text{\emph{Im}}(\omega_2) > 0$. Let $\Lambda$ be the lattice in $\C$ with basis $\langle \omega_1, \omega_2\rangle$. Define $I_P \subset \C/\Lambda$ as follows:
  \begin{align*}
    I_P := \begin{cases}[0,\omega_1], &\text{\emph{Im}}(z_P) = 0 \mod\Lambda,\\
      [0,\omega_1]\cup \left([0,\omega_1]+\frac{\omega_2}{2}\right), &\text{\emph{Im}}(z_P)=\text{\emph{Im}}\!\left(\frac{\omega_2}{2}\right) \mod\Lambda,\end{cases}
  \end{align*}
  where $[0,\omega_1]$ denotes the interval of real numbers. Then, for any $U \subseteq \R^2$, we have
  $$\lim_{n\to\infty}\frac{1}{2n}\#\{|k|<n\,:\,(x(kP),y(kP)) \in U\} = \frac{\mu\!\left(\{z\in I_P\,:\,(\wp(z),\wp'(z)) \in U\}\right)}{\mu\!\left(I_P\right)},$$
  where $\mu$ is the Lebesgue measure.
\end{theorem}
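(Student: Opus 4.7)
The plan is to reduce to Weyl's classical equidistribution theorem for irrational rotations, via the Weierstrass uniformization $z \mapsto (\wp(z), \wp'(z))$. The first step is to identify $I_P$ as exactly the preimage in $\C/\Lambda$ of the connected component of $E(\R)$ containing $P$. With the chosen period normalization, the lattice $\Lambda$ is stable under complex conjugation, so $\wp(\bar z) = \overline{\wp(z)}$ and $\wp'(\bar z) = \overline{\wp'(z)}$; both coordinates are simultaneously real precisely when $z \equiv \bar z \pmod \Lambda$ (the alternative $z \equiv -\bar z$ forces $\wp'(z) = 0$ and contributes only $2$-torsion points). A direct calculation with the basis of $\Lambda$ shows that the solutions are $\text{Im}(z) \in \tfrac{1}{2}\text{Im}(\omega_2)\Z$ modulo $\text{Im}(\omega_2)$ in the two-component case, giving the horizontal segments $[0,\omega_1]$ and $[0,\omega_1] + \omega_2/2$ (parameterizing the unbounded and bounded components of $E(\R)$ respectively), and $\text{Im}(z) = 0$ in the one-component case. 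Matching to the component containing $P$ recovers the two cases in the definition of $I_P$.

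Next, write $z_P = \alpha\omega_1$ in the first case and $z_P = \alpha\omega_1 + \omega_2/2$ in the second. The infinite order of $P$ forces $\alpha$ to be irrational, since otherwise some integer multiple of $z_P$ would lie in $\Lambda$. By Weyl's theorem, $\{k\alpha \bmod 1\}_{|k|<n}$ equidistributes in $[0,1]$ as $n \to \infty$ (the two-sided averaging is harmless because $\{-k\alpha\}$ has the same distribution as $\{k\alpha\}$). In the first case this immediately gives equidistribution of $\{kz_P \bmod \Lambda\}$ on $I_P = [0,\omega_1]$ with respect to Lebesgue measure. In the second case, splitting by parity of $k$: even indices give $kz_P \equiv k\alpha\omega_1 \pmod \Lambda$, which equidistribute on $[0,\omega_1]$ since $2\alpha$ is also irrational, while odd indices give $kz_P \equiv k\alpha\omega_1 + \omega_2/2 \pmod \Lambda$, equidistributing on the translate; jointly, $\{kz_P \bmod \Lambda\}$ equidistributes on $I_P$.

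Finally, setting $V := \{z \in I_P : (\wp(z), \wp'(z)) \in U\}$, the condition $(x(kP), y(kP)) \in U$ is equivalent to $kz_P \in V \pmod \Lambda$, so the stated limit follows by applying the equidistribution just established to the indicator of $V$ (which is Riemann integrable when, e.g., $U$ is open, and more generally whenever $\partial V$ has Lebesgue measure zero). The step I expect to require the most care is the first: correctly pinning down the real locus of $(\wp, \wp')$ under both choices of the period normalization and confirming that $I_P$ coincides precisely with the preimage of the component of $E(\R)$ containing $P$. The Weyl-equidistribution step and the change-of-variables transfer are then routine once this structural setup is in place.
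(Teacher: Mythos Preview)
Your approach is correct and is exactly the paper's: reduce to Weyl's equidistribution of $\{n\alpha\}$ for irrational $\alpha$ via the uniformization (the paper, after recording the structural facts about the real locus in its background section, simply says Theorem~\ref{dist} is ``an immediate consequence'' of Weyl). One small slip to fix: in the bounded-component case $I_P$ is not the preimage of the component containing $P$ but of all of $E(\R)$, since odd multiples of $P$ land on the bounded segment and even multiples on the unbounded one; your parity-splitting argument already handles this correctly, so only the opening description needs adjustment.
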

\noindent
\begin{figure}[H]
\begin{minipage}[t]{0.4\linewidth}
\centering
\includegraphics[
                 width=0.9\linewidth, valign=t,left]{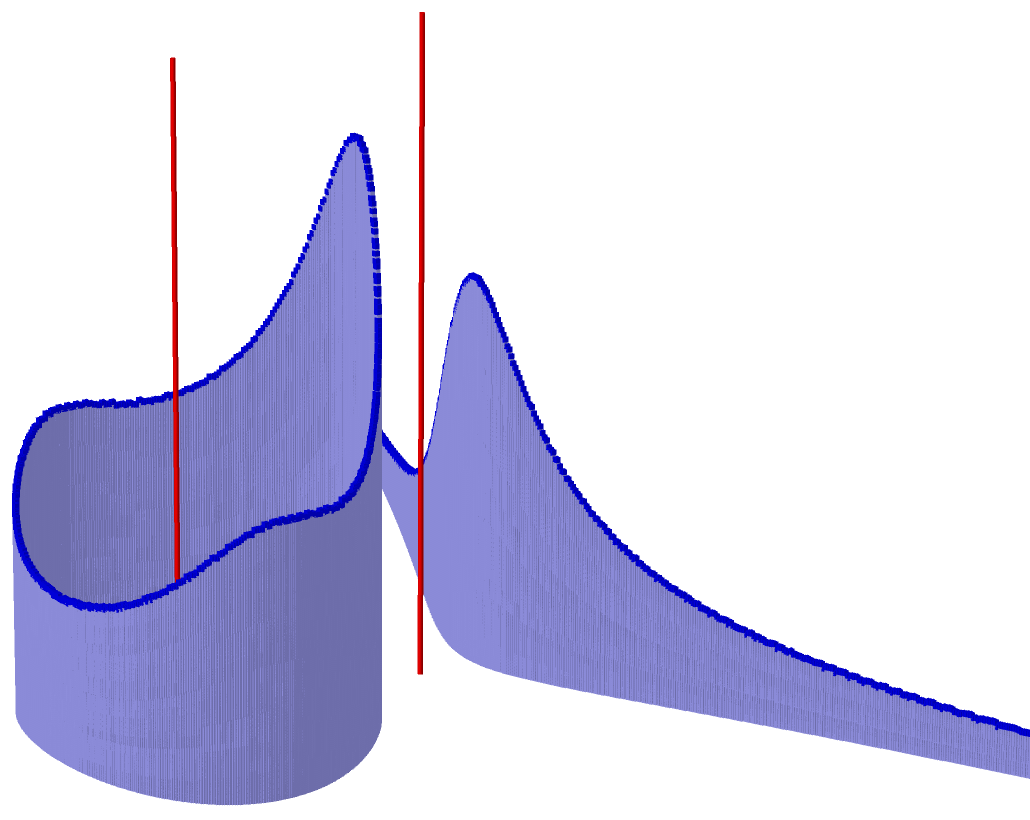}
\end{minipage}%
\begin{minipage}[t]{0.6\linewidth}
\begin{corollary}\label{density} Fix $P_0 = (x_0,y_0) \in E(\R)$ and $\eps > 0$. For all $P \in E(\R)$ of infinite order, the natural density of integers $n$ for which $(x(nP) - x_0)^2 + (y(nP) - y_0)^2 < \eps^2$ is
  $$\frac{2\eta(\eps + \cO(\eps^2))}{\omega_1\sqrt{y_0^2 + \left(6x_0^2 - \frac{g_2}{2}\right)^2}},$$
  where $\eta = 1$ if both $P$ and $P_0$ are on the unbounded component of $E(\R)$, $\eta = \tfrac{1}{2}$ if $P$ is on the bounded component of $E(\R)$, and $\eta = 0$ if $P_0$ is on the bounded component of $E(\R)$ but $P$ is not. The implied constant depends only on $E$ and $P_0$.
\end{corollary}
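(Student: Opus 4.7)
The plan is to apply Theorem \ref{dist} directly with $U$ taken to be the open disk $B((x_0,y_0),\eps)$. The density in question is then $\mu(\{z \in I_P : (\wp(z),\wp'(z)) \in U\})/\mu(I_P)$; the numerator I will estimate as a total arc length in the parameter domain, and the denominator is $\omega_1$ or $2\omega_1$ depending on the component of $P$.

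For the local computation, I fix a preimage $z_* \in I_P$ of $(x_0,y_0)$ and Taylor expand the parameterization along the real (or $\omega_2/2$-shifted real) line through $z_*$. Differentiating the Weierstrass relation $\wp'(z)^2 = 4\wp(z)^3 - g_2\wp(z) - g_3$ yields $\wp''(z) = 6\wp(z)^2 - g_2/2$, so
$$(\wp(z_* + t),\,\wp'(z_* + t)) = (x_0,y_0) + (y_0,\,6x_0^2 - g_2/2)\,t + \cO(t^2)$$
for small real $t$, with implied constant depending only on $E$ and $P_0$. The preimage of $U$ near $z_*$ is therefore an interval of length $(2\eps + \cO(\eps^2))/\sqrt{y_0^2 + (6x_0^2 - g_2/2)^2}$. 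I also need to verify that this speed never vanishes: $y_0 = 0$ forces $x_0$ to equal one of the roots $e_j$ of $4x^3 - g_2 x - g_3$, and $6e_j^2 = g_2/2$ combined with $e_1 + e_2 + e_3 = 0$ would force two of the $e_i$ to coincide, contradicting the nonsingularity of $E$.

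For the case analysis I will use the standard real-analytic structure of $\wp$: the map $z \mapsto (\wp(z),\wp'(z))$ sends $[0,\omega_1]$ onto the unbounded component of $E(\R)$ bijectively away from the point at infinity, and sends $[0,\omega_1] + \omega_2/2$ onto the bounded component when the latter exists (which is exactly when $\text{Re}(\omega_2) = 0$). Each real point $P_0$ therefore has exactly one preimage in the piece of $I_P$ corresponding to its own component, and none in the other piece. Going through the four pairs of component memberships for $P$ and $P_0$, summing the length contributions, and dividing by $\mu(I_P) \in \{\omega_1, 2\omega_1\}$ reproduces the three advertised values of $\eta$: both unbounded gives $1/\omega_1$, either bounded case for $P$ gives $1/(2\omega_1)$, and $P_0$ bounded with $P$ unbounded gives $0$.

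The main obstacle is largely bookkeeping: confirming the speed $\sqrt{y_0^2 + (6x_0^2 - g_2/2)^2}$ is strictly positive and keeping careful track of the preimage count in each of the four cases. The error terms propagate transparently from the $\cO(t^2)$ in the Taylor expansion to the $\cO(\eps^2)$ inside the stated formula, and the hypothesis that $P$ has infinite order is used only through the application of Theorem \ref{dist}.
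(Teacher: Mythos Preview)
Your proof is correct and follows essentially the same route as the paper: apply Theorem \ref{dist} with $U$ the $\eps$-disk, Taylor expand the parameterization at a preimage $z_*$ of $P_0$, use $\wp''=6\wp^2-g_2/2$ to identify the velocity vector, and read off the length of the preimage interval. Your treatment is in fact more complete than the paper's, which neither verifies that the speed $\sqrt{y_0^2+(6x_0^2-g_2/2)^2}$ is nonzero nor spells out the preimage-counting case analysis behind the $\eta$ factor; both additions are correct and worth keeping.
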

\end{minipage}
\end{figure}
\noindent{\small \begin{figurecap}\label{densfig} $\{nP\,:\,|n| < 5\cdot 10^5,\, x(nP) < 1.89\}$ for $P = (0,0)$ on \emph{E37a}: $y^2 + y = x^3 - x$ \cite{lmfdb37a}, with the poles of the density function from corollary \ref{density} shown.\end{figurecap}}
\noindent We then obtain the following spacing law:
\begin{figure}[H]
  \begin{minipage}[t]{0.6\linewidth}
    \begin{corollary}\label{spacing} Let $E:y^2 = x^3 + ax + b$ be an elliptic curve, let $Q = (x_Q,y_Q)$ be an arbitrary fixed point in $E(\R)$, and let $d$ be an arbitrary real number. Define
  \begin{align*}
    &F_{\pm,Q}(x) := \left(\frac{\pm\sqrt{x^3 + ax + b} - y_Q}{x - x_Q}\right)^2 - 2x - x_Q\\
    &\text{\it{and}}\\
    &\rho(x) := \frac{1}{\sqrt{x^3 + ax + b}}.
  \end{align*}
  \textit{Let $x^\pm_1,\ldots,x^\pm_{k^\pm}$ be the real solutions to $F_{\pm,Q}(x) = d$. Then, for any point $P$ in $E(\R)$ of infinite order, the distribution of the values $x(nP + Q) - x(nP)$ as $n$ varies over the integers is proportional to the function $f(d)$, defined as
  $$f(d) := \sum_{i = 1}^{k^+}\!{\vphantom{\sum}}^*\frac{\rho(x^+_i)}{F'_{+,Q}(x^+_i)} + \sum_{i = 1}^{k^-}\!{\vphantom{\sum}}^*\frac{\rho(x^-_i)}{F'_{-,Q}(x^-_i)},$$
  where $\sum^*$ indicates that, if $P$ is on the unbounded component of $E(\R)$, then the sum omits the $x^\pm_i$ for which $x^\pm_i$ is not the $x$-coordinate of any point on the unbounded component of $E(\R)$.}\end{corollary}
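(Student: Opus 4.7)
The plan is to reduce the corollary to theorem \ref{dist} by combining the elliptic curve addition formula with a change of variables. Writing $P_n = nP = (x_n, y_n)$, the group law gives $x(nP+Q) = ((y_Q - y_n)/(x_Q - x_n))^2 - x_n - x_Q$, so
\[
  x(nP+Q) - x(nP) = \left(\frac{y_Q - y_n}{x_Q - x_n}\right)^2 - 2x_n - x_Q = F_{\mathrm{sgn}(y_n),\,Q}(x_n),
\]
using $y_n = \pm\sqrt{x_n^3 + ax_n + b}$ with the sign determined by the sheet of $E(\R)$ containing $nP$. The finitely many $n$ for which $x_n = x_Q$ contribute zero density and can be ignored.

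Next, I would apply theorem \ref{dist} with $U_d = \{(x,y) \in E(\R) : F_{\mathrm{sgn}(y),Q}(x) \leq d\}$, which expresses the natural density of $\{n : x(nP+Q) - x(nP) \leq d\}$ as the ratio $\mu(\{z \in I_P : F_{\mathrm{sgn}(\wp'(z)),Q}(\wp(z)) \leq d\})/\mu(I_P)$. To extract the density of the distribution, I would differentiate in $d$ and change variables: from $dx = \wp'(z)\,dz = y\,dz$ one gets $dz = \rho(x)\,dx$ on each monotonicity arc of $\wp|_{I_P}$, and the further substitution $d = F_{\pm,Q}(x)$ introduces the Jacobian $|F'_{\pm,Q}(x_i^\pm)|^{-1}$ at each real preimage $x_i^\pm$ of $d$. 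The resulting expression matches $f(d)$ after absorbing absolute values into the proportionality constant. The $\sum^{*}$ convention then emerges naturally: when $P$ is on the unbounded component, $I_P = [0,\omega_1]$ and $\wp|_{I_P}$ parameterizes only the unbounded component, so any root $x_i^\pm$ falling outside its $x$-range has empty fiber and must be dropped; when $P$ is on the bounded component, $I_P$ includes the translate by $\omega_2/2$ and $\wp|_{I_P}$ sweeps both components of $E(\R)$, so every real root of $F_{\pm,Q}(x) = d$ contributes.

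The main obstacle I anticipate is bookkeeping rather than any deep ingredient: on each of the two (unbounded-component case) or four (bounded-component case) monotonicity arcs of $\wp|_{I_P}$, one must check that the map $z \mapsto (\wp(z), \mathrm{sgn}(\wp'(z)))$ bijects onto one $y$-sheet of one component of $E(\R)$, and that the choice of $F_{\pm,Q}$ matches $\mathrm{sgn}(y_n) = \mathrm{sgn}(\wp'(z_n))$. Once this dictionary is in place, the chain-rule computation produces the claimed form of $f(d)$ directly, and the absolute-value issue reduces to noting that the density is well-defined only up to a global positive constant.
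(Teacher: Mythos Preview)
Your approach is essentially the paper's: both start from the addition-law identity $x(P+Q)-x(P)=F_{\pm,Q}(x_P)$ and then push the equidistribution of $nP$ through the map $x\mapsto F_{\pm,Q}(x)$. The only structural difference is that the paper routes the computation through lemma~\ref{diffdensity} (the density of $x(nP)$ on a short interval is $\propto \rho(x)$), whereas you go straight from theorem~\ref{dist} via the holomorphic-differential identity $dz=dx/y$. The paper explicitly remarks that this shortcut gives a shorter derivation of lemma~\ref{diffdensity}, so the two arguments are really the same one read at different levels.

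One point to tighten: your handling of the absolute values is not right. The Jacobian of $d=F_{\pm,Q}(x)$ contributes $|F'_{\pm,Q}(x_i^\pm)|^{-1}$ to the $i$-th term, and since the sign of $F'_{\pm,Q}$ can differ from root to root these cannot be ``absorbed into the proportionality constant'' --- a single global constant cannot flip signs term by term. The corollary as stated (and the paper's own proof) writes $F'$ rather than $|F'|$, which is an informality; your job is just to produce a nonnegative density, so the correct thing is to keep $|F'_{\pm,Q}(x_i^\pm)|$ in each summand and note that the statement is to be read this way.
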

\end{minipage}
\begin{minipage}[t]{0.4\linewidth}
  \vspace*{-1cm}
  \hspace*{0cm}
\centering
\includegraphics[width=0.95\linewidth,height=3.8cm, valign=t,right]{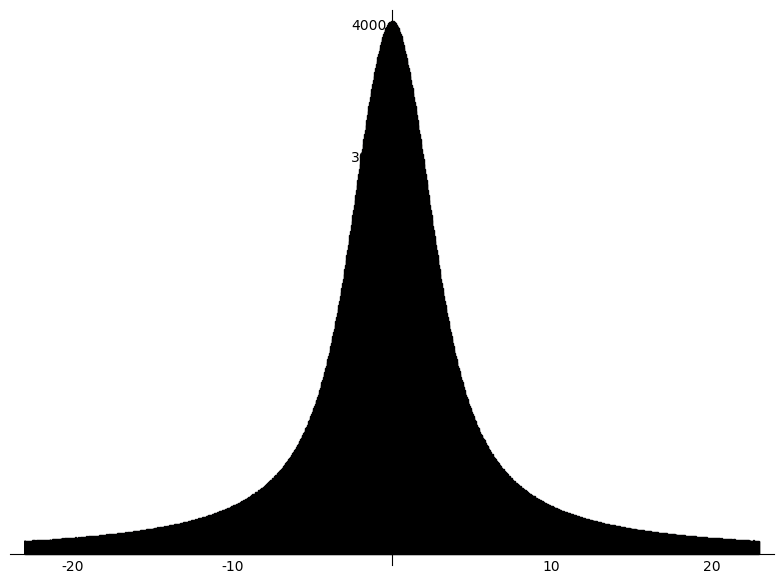}
\begin{adjustwidth}{1em}{0.2em}
  {\small \begin{figurecap} $\{x((n+1)P) - x(nP)\,:\,1 < n < 10^6\}$ for $P \approx (-0.406,0.966)$ on $E: y^2 = x^3 + 1$, with the top 10\% and bottom 10\% of the data omitted.\end{figurecap}}
    \end{adjustwidth}
\includegraphics[width=0.95\linewidth,height=3.8cm, valign=t,right]{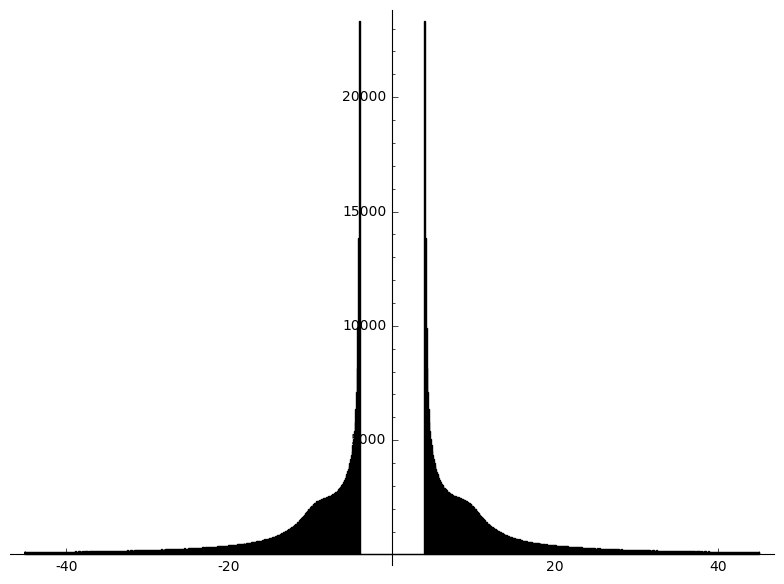}
    \begin{adjustwidth}{1em}{0.2em}
      {\small \begin{figurecap} $\{x((n+1)P) - x(nP)\,:\,1 < n < 10^6\}$ for
          $P = (0,4)$ on E37a: $y^2 = x^3 - 16x + 16$, with the top 10\% and bottom 10\% of the data omitted.\end{figurecap}}
    \end{adjustwidth}

\end{minipage}%
\end{figure}
~\\
We also show in corollaries \ref{spacingmomentslower} and \ref{spacingmomentasymptotic} that the raw moments of the function $f(d)$ diverge, and give an upper bound for the associated partial sums.\\
~\\
As an application of these growth and distribution results, we explain certain numerical observations of Bremner and Macleod made in \cite{bm14}. There, for every integer $N \leq 1000$, Bremner and Macleod find the positive integer solutions $a,b,c$ to the equation
\begin{equation}\label{fruit}
  \frac{a}{b+c}+\frac{b}{a+c}+\frac{c}{a+b} = N.
\end{equation}
Solutions to (\ref{fruit}) are given by certain rational points on certain elliptic curves $E_N$. If $E_N$ has rank $1$ and $P$ is a generator for $E_N$, then Bremner and Macleod make numerical observations regarding the set of $n \in \Z$ for which $nP$ yields a solution to equation (\ref{fruit}). In particular, they tabulate the smallest positive integers $n$ which yield solutions as $N$ varies, and ask about the proportion of integers $n$ that yield solutions for fixed $N$. Using corollary \ref{density} we give the proportion exactly.\\
\section{Background}\label{background}
Let $E/\C$ be an elliptic curve with periods $\omega_1$ and $\omega_2$, chosen such that $\omega_1 \in \R_{>0}$ and $\text{Im}(\omega_2) > 0$, and let $\Lambda$ be the lattice in $\C$ with basis $\langle \omega_1, \omega_2 \rangle$. As stated in the introduction, the Weierstrass-$\wp$ function defined by
$$\wp(z) := \frac{1}{z^2} + \sum_{\substack{\lambda \in \Lambda\\\lambda\neq 0}}\left(\frac{1}{(z-\lambda)^2} - \frac{1}{\lambda^2}\right)$$
gives a parameterization $\C/\Lambda \to E(\C)$ via $z \mapsto (\wp(z),\wp'(z))$. This parameterization is discussed at length in \cite{silverman1}, \cite{silverman2}, and \cite{darmon}, for example.\\
\\
The function $\wp(z)$ has a pole of order $2$ when $z \in \Lambda$ and has no other poles. From this it follows that the set of $z \in \C/\Lambda$ which have imaginary part $0$ modulo $\Lambda$ maps to the unbounded component of $E(\R)$, and, if $\omega_2$ can be chosen to be purely imaginary, the set of $z \in \C/\Lambda$ which have imaginary part $\tfrac{1}{2}\text{Im}(\omega_2)$ modulo $\Lambda$ maps to the bounded component of $E(\R)$. If $z_P$ has imaginary part $\tfrac{1}{2}\text{Im}(\omega_2)$ modulo $\Lambda$, then $nz_P$ will as well exactly when $n$ is odd, and thus the $x$ and $y$ coordinates of $nP$ will be on the bounded component of $E(\R)$ exactly when $n$ is odd. Moreover, we will use the fact that $\wp$ is an even function, and that the Laurent expansion of $\wp$ at its poles has no constant term.\\
\\
For a real number $r$, let $\{r\}$ denote the distance from $r$ to the nearest integer. If $P \in E(\R)$ and $x(nP)$ and $y(nP)$ denote the $x$ and $y$ coordinates of $nP$ respectively, then it follows from the previously stated facts about the Laurent expansion of $\wp$ at its poles that
$$x(nP) = \omega_1^{-2}\left\{n\frac{z_P}{\omega_1}\right\}^{-2} + \cO\!\left(\left\{n\frac{z_P}{\omega_1}\right\}^{2}\right) \quad\quad\text{and}\quad\quad y(nP) = -2\omega_1^{-3}\left\{n\frac{z_P}{\omega_1}\right\}^{-3} + \cO\!\left(\left\{n\frac{z_P}{\omega_1}\right\}\right)$$
when $z_P$ is the element of $\C/\Lambda$ which maps to $P$ under the parameterization $z \mapsto (\wp(z),\wp'(z))$, and the implied constants depend only on $E$.\\
\\
The following lemmas will be useful for studying $\left\{n\frac{z_P}{\omega_1}\right\}$:\\
\begin{lemma}\label{hurwitz} \textbf{\emph{(Hurwitz)}} For all irrational $\alpha$ there exist infinitely many natural numbers $n$ such that
  $$\{n\alpha\} < \frac{1}{\sqrt{5}n}.$$
\end{lemma}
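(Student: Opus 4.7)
The plan is to use classical continued fraction theory as a black box. Let $\alpha = [a_0;a_1,a_2,\ldots]$ be the simple continued fraction expansion of $\alpha$, and let $p_k/q_k$ denote its convergents. Since $\{q_k\alpha\}\leq|q_k\alpha-p_k|=q_k|\alpha-p_k/q_k|$ and $q_k\to\infty$, it suffices to produce infinitely many $k$ with $|\alpha-p_k/q_k|<\frac{1}{\sqrt{5}\,q_k^2}$. I would actually establish the stronger claim that this bound holds for at least one of every three consecutive convergents.

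I would argue by contradiction: suppose $p_{k-1}/q_{k-1}$, $p_k/q_k$, $p_{k+1}/q_{k+1}$ all fail the bound. Using the standard facts that consecutive convergents lie on opposite sides of $\alpha$ and that $|p_jq_{j-1}-p_{j-1}q_j|=1$, for each $j\in\{k,k+1\}$ the two assumed lower bounds sum to
\[\frac{1}{q_{j-1}q_j}=\left|\alpha-\frac{p_{j-1}}{q_{j-1}}\right|+\left|\alpha-\frac{p_j}{q_j}\right|\geq\frac{1}{\sqrt{5}\,q_{j-1}^2}+\frac{1}{\sqrt{5}\,q_j^2}.\]
Setting $r_j:=q_{j-1}/q_j\in(0,1)$, this reduces to $r_j^2-\sqrt{5}\,r_j+1\leq 0$, forcing $r_j\geq(\sqrt{5}-1)/2$.

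To close the contradiction I would invoke the convergent recurrence $q_{k+1}=a_{k+1}q_k+q_{k-1}$ with $a_{k+1}\geq 1$, giving
\[\frac{1}{r_{k+1}}=a_{k+1}+r_k\geq 1+\frac{\sqrt{5}-1}{2}=\frac{\sqrt{5}+1}{2},\]
so $r_{k+1}\leq(\sqrt{5}-1)/2$. Combined with the previous bound, this forces $r_{k+1}=(\sqrt{5}-1)/2$ exactly — impossible since $r_{k+1}=q_k/q_{k+1}$ is rational while $(\sqrt{5}-1)/2$ is not.

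The main obstacle is not the algebra, which takes only a few lines, but rather marshalling enough continued-fraction background (existence of the convergent sequence, the alternation of sides, and the determinantal identity) before the punchline is available. The specific constant $\sqrt{5}$ is forced by the discriminant condition on the quadratic $r^2-cr+1$, which explains why the golden-ratio-related quantity $(\sqrt{5}-1)/2$ appears here.
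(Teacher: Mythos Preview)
Your argument is correct: the contradiction via three consecutive convergents, the quadratic in $r_j=q_{j-1}/q_j$, and the irrationality of $(\sqrt{5}-1)/2$ are exactly the classical proof of Hurwitz's theorem. The paper itself does not give a proof at all---it simply cites Hurwitz's 1891 paper (Satz~1)---so your proposal is not so much a different route as an explicit reconstruction of what that citation stands for. The only thing worth tightening is the passage from the convergent inequality back to the lemma as stated: you might note that for convergents $p_k/q_k$ one in fact has $\{q_k\alpha\}=|q_k\alpha-p_k|$ (the best-approximation property), though the weaker inequality you wrote already suffices.
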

\begin{proof} See \cite{hur1891}, Satz 1.\end{proof}
\begin{lemma}\label{dirichletsimul} \textbf{\emph{(Dirichlet)}} Let $\alpha_1,\dots,\alpha_k$ be irrational numbers. There exist infinitely many natural numbers $n$ such that
  $$\{n\alpha_j\} < \frac{1}{n^{\frac{1}{k}}}$$
  for all $j \in \{1,2,\dots,k\}$.
\end{lemma}
\begin{proof} See \cite{hw08} theorem 200.\end{proof}
\begin{lemma}\label{khinchin} \textbf{\emph{(Khinchin)}} Let $\psi$ be a non-decreasing function from $\N$ to $\R_{>0}$, If $\sum\limits_{n=1}^\infty \psi(n)^{-1}$ diverges, then for all real numbers $\alpha$ except for a set of Lebesgue measure zero, there exist infinitely many natural numbers $n$ such that
  $$\{n\alpha\} < \frac{1}{\psi(n)},$$
  while if $\sum\limits_{n=1}^\infty \psi(n)^{-1}$ converges, then the set of real numbers $\alpha$ for which there exist infinitely many natural numbers $n$ such that
  $$\{n\alpha\} < \frac{1}{\psi(n)}$$
  has Lebesgue measure zero.
\end{lemma}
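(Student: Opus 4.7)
The plan is to reduce to $\alpha \in [0,1]$, recast the claim in terms of the sets
\[
  A_n := \{\alpha \in [0,1] : \{n\alpha\} < 1/\psi(n)\},
\]
and apply the two directions of the Borel--Cantelli lemma. Since $\{n\alpha\}$ is $1$-periodic in $\alpha$, the full set in question is $1$-periodic in $\alpha$, so it has Lebesgue measure zero (resp.\ full measure) in $\R$ iff it does in $[0,1]$. Each $A_n$ is a union of $n+1$ intervals of radius $1/(n\psi(n))$ centered at the points $k/n$ for $k=0,1,\ldots,n$, with the two endpoint intervals truncated by $[0,1]$, which gives $\mu(A_n) = 2/\psi(n)$ as soon as $\psi(n) \geq 1$.

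For the convergence half, if $\sum_n \psi(n)^{-1} < \infty$ then $\sum_n \mu(A_n) < \infty$, and the standard Borel--Cantelli lemma immediately yields $\mu(\limsup_n A_n) = 0$, which is the desired conclusion.

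For the divergence half the goal is $\mu(\limsup_n A_n) = 1$ under the assumption $\sum_n \psi(n)^{-1} = \infty$, and I would proceed in two steps. First, I would prove a quasi-independence estimate of the shape
\[
  \mu(A_m \cap A_n) = \mu(A_m)\mu(A_n) + E(m,n)
\]
by Fourier-expanding each $1_{A_n}$, which has period $1/n$ and whose Fourier coefficients are therefore supported on multiples of $n$; the constant coefficients contribute the main term, and after summing for $m,n \leq N$, the monotonicity of $\psi$ lets the total error $\sum_{m,n} E(m,n)$ be absorbed into $\left(\sum_{n \leq N}\mu(A_n)\right)^2$. The Kochen--Stone (divergence) form of Borel--Cantelli then yields $\mu(\limsup_n A_n) > 0$. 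Second, to promote positive measure to full measure I would use a zero-one law: for any rational $p/q$, restricting to $n$ divisible by $q$ makes $\limsup_n A_n$ literally invariant under $\alpha \mapsto \alpha + p/q$, and the monotonicity of $\psi$ guarantees that $\sum_{q\mid n} \psi(n)^{-1}$ still diverges so nothing is lost by this restriction. Ergodicity of Lebesgue measure under the dense group of rational rotations of the circle then forces $\mu(\limsup_n A_n) \in \{0,1\}$, and the positivity from the first step fixes the value at $1$.

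The main obstacle will be the quasi-independence bound. A naive interval-counting estimate is wrecked by resonant pairs $(m,n)$ with large $\gcd(m,n)$, at which the arithmetic progressions $k/m$ and $\ell/n$ align; controlling the contribution from such pairs is exactly where the monotonicity of $\psi$ enters in an essential way. Without monotonicity the corresponding statement becomes the Duffin--Schaeffer conjecture, which remained open for eighty years until the recent work of Koukoulopoulos and Maynard, and this is the technically delicate heart of Khinchin's theorem.
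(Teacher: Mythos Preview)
The paper does not actually prove this lemma; its entire proof is the sentence ``See \cite{khin26}.'' So there is no in-paper argument to compare against, and your sketch is already far more detailed than what the paper provides.

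Your outline is the standard modern proof of Khinchin's theorem. The convergence half is exactly right, and for the divergence half the decomposition into (i) a quasi-independence estimate for $\mu(A_m\cap A_n)$, (ii) the Kochen--Stone inequality to extract positive measure, and (iii) a zero-one principle to upgrade to full measure, is the canonical route. You also correctly locate the technical crux in the resonant pairs with large $\gcd(m,n)$ and correctly note that monotonicity of $\psi$ is exactly what tames them (and that dropping it lands you in Duffin--Schaeffer territory).

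One step is imprecise. The set $\limsup_n A_n$ is \emph{not} itself invariant under $\alpha\mapsto\alpha+p/q$; what is invariant is the smaller set $B_q:=\limsup_{q\mid n}A_n$, and in general $B_q\subsetneq\limsup_n A_n$, so you cannot literally invoke ergodicity of the rational rotation group on a single invariant set. The repair is close to what you wrote: run the quasi-independence/Kochen--Stone argument on the subsequence $(A_{qk})_k$ (your observation that $\sum_{q\mid n}\psi(n)^{-1}=\infty$ by monotonicity is precisely what is needed) to obtain $\mu(B_q)\geq c>0$ with $c$ independent of $q$. Since $B_q$ is $1/q$-periodic and $B_q\subseteq B:=\limsup_n A_n$, every interval $[j/q,(j+1)/q)$ meets $B$ in relative measure at least $c$; letting $q\to\infty$ and applying the Lebesgue density theorem forces $\mu(B)=1$. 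This is essentially Cassels' form of the zero-one law, and Gallagher later showed the zero-one conclusion holds even without monotonicity.
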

\begin{proof} See \cite{khin26}. \end{proof}
~\\
In the opposite direction, we have the following:
\begin{lemma}\label{conv} For any function $\psi(n):\N\to\R_{>0}$, there exists a real number $\alpha$ such that the inequality
  $$\{n\alpha\} < \frac{1}{\psi(n)}$$
  is satisfied for infinitely many natural numbers $n$.
\end{lemma}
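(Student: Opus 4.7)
The plan is to construct $\alpha$ directly as the common point of a nested sequence of closed intervals, in the style of a Liouville-type approximation argument. Baire category applied to the open sets $\{\alpha:\{n\alpha\}<1/\psi(n)\}$ is not quite available here, because for a single $n$ this set consists of intervals of total length only $2/\psi(n)$ spaced by gaps of size $\approx 1/n$, so it need not be dense. However, by choosing the successive values of $n$ adaptively (depending on the interval produced at the previous stage), one can nest the conditions.

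The construction proceeds by induction. I would produce a strictly increasing sequence $n_1 < n_2 < \cdots$ of natural numbers together with a nested sequence $I_1 \supset I_2 \supset \cdots$ of closed intervals of positive length such that every $\alpha \in I_k$ satisfies $\{n_k\alpha\} < 1/\psi(n_k)$. For $k=1$, take $n_1 = 1$ and let $I_1$ be any short closed interval around an integer, say $I_1 = [-r_1, r_1]$ with $0 < r_1 < 1/\psi(1)$. For the inductive step, suppose $I_k = [a_k,b_k]$ has length $\ell_k > 0$. Choose $n_{k+1} > n_k$ large enough that $3/n_{k+1} < \ell_k$; then some fraction $j/n_{k+1}$ lies in $[a_k + 1/n_{k+1},\, b_k - 1/n_{k+1}]$. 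Define
\[
I_{k+1} := \bigl[j/n_{k+1} - r_{k+1},\; j/n_{k+1} + r_{k+1}\bigr],
\qquad r_{k+1} := \min\!\left(\tfrac{1}{2n_{k+1}\psi(n_{k+1})},\; \tfrac{1}{n_{k+1}}\right).
\]
By construction $I_{k+1} \subset I_k$, and for any $\alpha \in I_{k+1}$ we have $|n_{k+1}\alpha - j| \le n_{k+1} r_{k+1} \le 1/(2\psi(n_{k+1})) < 1/\psi(n_{k+1})$, so $\{n_{k+1}\alpha\} < 1/\psi(n_{k+1})$.

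By the nested interval theorem (each $I_k$ is a nonempty closed bounded interval), $\bigcap_{k\ge 1} I_k \neq \emptyset$. Any $\alpha$ in this intersection satisfies $\{n_k\alpha\} < 1/\psi(n_k)$ for every $k$, and since the $n_k$ are distinct this gives infinitely many $n$ with $\{n\alpha\} < 1/\psi(n)$, as required.

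The only subtle point is making sure the inductive step is always feasible, i.e.\ that we can fit a new sub-interval satisfying the next constraint strictly inside the previous one. This is handled by the freedom to take $n_{k+1}$ as large as we like: once $1/n_{k+1}$ is much smaller than $\ell_k$, the fractions $j/n_{k+1}$ form a $1/n_{k+1}$-net of the real line, so one of them lands well inside $I_k$, leaving enough room for a shrunken interval $I_{k+1}$ whose radius is dictated by $\psi(n_{k+1})$. No restriction on the growth of $\psi$ is needed.
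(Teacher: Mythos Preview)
Your argument is correct. The paper does not actually prove this lemma; it simply cites Theorem~22 of Khinchin's \emph{Continued Fractions}. Khinchin's proof there constructs $\alpha$ via its continued fraction expansion $[a_0;a_1,a_2,\dots]$, choosing each partial quotient $a_{k+1}$ large enough that the convergent $p_k/q_k$ satisfies $|q_k\alpha - p_k| < 1/\psi(q_k)$. Your nested-interval construction is a more elementary repackaging of the same idea: instead of controlling the approximation quality through the partial quotients, you directly pick the next denominator $n_{k+1}$ large enough that a fraction $j/n_{k+1}$ lands well inside the current interval, then shrink around it. Both approaches exploit the same freedom (take the next stage ``large enough''), but yours avoids any appeal to continued-fraction theory and makes the argument entirely self-contained. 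One cosmetic remark: in the base case you might note that if $1/\psi(1) > 1/2$ the inequality $\{\alpha\} < 1/\psi(1)$ is automatic, so taking $r_1 < \min(1/2,\,1/\psi(1))$ covers all cases cleanly.
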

\begin{proof} See \cite{khin64}, theorem 22.\end{proof}
\section{Growth Rates}\label{growthsection}
Using lemmas \ref{hurwitz}, \ref{dirichletsimul}, \ref{khinchin}, and \ref{conv} we can now prove theorems \ref{absbound}, \ref{complexdirichlet}, \ref{typical}, and \ref{arbitrarilybig}.\\
\begin{proof}[Proof of theorem \ref{absbound}] First suppose that $P$ is a point of infinite order on the unbounded component of $E(\R)$, and let $z_P$ be the preimage of $P$ under the parameterization $\C/\Lambda \to E(\C)$ defined by $z \mapsto (\wp(z),\wp'(z))$, where $\Lambda$ is the lattice in $\C$ with basis $\langle \omega_1, \omega_2 \rangle$. Then $z_P$ is real modulo $\Lambda$. From the observations in section \ref{background}, we have
$$\wp(nz_P) = \omega_1^{-2}\left\{n\frac{z_P}{\omega_1}\right\}^{-2} + \cO\!\left(\left\{n\frac{z_P}{\omega_1}\right\}^2\right) \quad\text{and}\quad \wp'(nz_P) = -2\omega_1^{-3}\left\{n\frac{z_P}{\omega_1}\right\}^{-3} + \cO\!\left(\left\{n\frac{z_P}{\omega_1}\right\}\right),$$
where the implied constants depend only on $E$. Lemma \ref{hurwitz} implies that the inequality $\left\{n\frac{z_P}{\omega_1}\right\} < \frac{1}{\sqrt{5}n}$ holds for infinitely many $n$, so for these $n$ we have
$$\wp(nz_P) > \frac{5}{\omega_1^2}n^2 + \cO(n^{-2}),$$
and
$$\wp'(nz_P) > \frac{2\cdot 5^{\frac{3}{2}}}{\omega_1^3}n^3 + \cO(n^{-1}).$$
Now if instead $P$ is on the bounded component of $E(\R)$, then $2z_P$ is real modulo $\Lambda$, so the argument above can be applied to $2P$.\end{proof}
~\\
Repeating this argument and using lemma \ref{dirichletsimul} in the case where $k = 2$, $\alpha_1 = \text{Re}(z_P)$, and $\alpha_2 = \text{Im}(z_P)$ proves theorem \ref{complexdirichlet}. Repeating the argument and using lemma \ref{khinchin} instead of lemma \ref{hurwitz} proves theorem \ref{typical}. Finally, using lemma \ref{conv} in this argument proves theorem \ref{arbitrarilybig}.\\
\section{Distributions}\label{distsection}
Next we turn our attention to results about the full distribution of $x(nP)$ and $y(nP)$ as $n$ varies. Let $X$ be a topological space with a measure $\mu$. We say that a sequence $(a_n)$ of elements of $X$ is equidistributed with respect to $\mu$ if and only if, for every function $f:X\to\C$, we have
$$\lim_{n\to\infty}\frac{1}{n}\sum_{k=1}^n f(a_k) = \int_X f(x)\,d\mu.$$
Sometimes we will say that a sequence is equidistributed in a space if it's clear what the associated measure is. In particular, we will say that a sequence is \textit{equidistributed modulo 1} if and only if it is equidistributed in the interval $[0,1]$ with respect to the Lebesgue measure.\\
\\
The following result, due to Weyl \cite{weyl16}, is an important tool for proving that certain sequences are equidistributed modulo $1$:
\begin{lemma}\label{weyl}\textbf{\emph{(Weyl's criterion)}} A sequence $(a_n)$ of real numbers is equidistributed modulo $1$ if and only if for every nonzero integer $\ell$ we have
  $$\lim_{n\to\infty}\frac{1}{n}\sum_{k=1}^n\exp(2\pi i \ell a_k) = 0$$
\end{lemma}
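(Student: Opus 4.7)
The plan is to prove the two directions of the equivalence separately. The forward direction is immediate from the definition: if $(a_n)$ is equidistributed modulo $1$, applying the hypothesis to the continuous $1$-periodic test function $f(x) = \exp(2\pi i \ell x)$ (so that $f(a_k) = f(\{a_k\})$) yields $\lim_{n\to\infty}\tfrac{1}{n}\sum_{k=1}^n f(a_k) = \int_0^1 \exp(2\pi i \ell x)\,dx$, and the integral on the right vanishes whenever $\ell \neq 0$.

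For the reverse direction, assume the exponential-sum condition. By linearity in the test function, the conclusion $\lim_{n\to\infty}\tfrac{1}{n}\sum_{k=1}^n P(\{a_k\}) = \int_0^1 P(x)\,dx$ then holds for every trigonometric polynomial $P(x) = \sum_{|\ell|\leq N} c_\ell \exp(2\pi i \ell x)$, since the $\ell = 0$ term contributes exactly $c_0 = \int_0^1 P$ and the remaining terms vanish in the limit. I would next invoke the classical Weierstrass theorem on approximation by trigonometric polynomials: any continuous $1$-periodic function $f:[0,1]\to\C$ can, for any $\eps > 0$, be uniformly approximated by a trigonometric polynomial $P_\eps$ with $\|f - P_\eps\|_\infty < \eps$. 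A standard three-term split
\[\left|\tfrac{1}{n}\sum_{k=1}^n f(\{a_k\}) - \int_0^1 f\right| \leq \left|\tfrac{1}{n}\sum_{k=1}^n (f - P_\eps)(\{a_k\})\right| + \left|\tfrac{1}{n}\sum_{k=1}^n P_\eps(\{a_k\}) - \int_0^1 P_\eps\right| + \left|\int_0^1 (P_\eps - f)\right|\]
bounds the first and third terms by $\eps$ uniformly in $n$, while the middle term tends to $0$ as $n\to\infty$ by the trigonometric-polynomial case; letting $\eps\to 0$ establishes equidistribution against every continuous periodic test function.

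To reach the full definition of equidistribution (which in practice is used against indicators of intervals or, more generally, Riemann integrable functions), I would sandwich any such test function $f$ between continuous periodic functions $f_- \leq f \leq f_+$ with $\int_0^1 (f_+ - f_-)\,dx < \eps$, apply the continuous case to each of $f_\pm$, and pass $\eps \to 0$. The main obstacle, though more a matter of care than of depth, is the order of limits in the three-term split: the approximant $P_\eps$ must be selected and fixed \emph{before} taking $n\to\infty$, which is exactly why uniform (rather than, say, $L^2$) approximation by trigonometric polynomials is the right tool, and why the sandwich argument for discontinuous $f$ must be carried out at the level of the already-established continuous case.
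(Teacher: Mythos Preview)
Your argument is correct and is the standard proof of Weyl's criterion. The paper itself does not give a proof of this lemma at all; it simply attributes the result to Weyl and cites \cite{weyl16}, so there is nothing to compare against. One minor remark: the paper's stated definition of equidistribution quantifies over ``every function $f:X\to\C$'', which taken literally is too strong; your proof correctly interprets this as continuous (and then Riemann integrable) test functions, which is the standard and intended meaning.
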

~\\
This lemma implies that, for any irrational $\alpha$, the sequence $a_n = \{n\alpha\}$ is equidistributed modulo $1$ (\cite{weyl16}, Satz 2). Theorem \ref{dist} is an immediate consequence of this fact.\\
\\
One simple application of theorem \ref{dist} comes from taking $U = [X,\infty]\times\R$ for large $X$ or $U = \R\times[Y,\infty]$ for large $Y$. Then, if $P$ is on the unbounded component of $E(\R)$, we have
\begin{align*}
  \lim_{n\to\infty}\frac{1}{2n}\#\{|k|<n\,:\,x(kP) > X\} &= \frac{\mu(\{z\in [0,\omega_1]\,:\,\wp(z) > X\})}{\mu([0,\omega_1])}\\
  &= \frac{\mu\!\left(\left\{z\in \left[-\frac{\omega_1}{2},\frac{\omega_1}{2}\right]\,:\,\frac{1}{z^2} + \cO(z^2) > X\right\}\right)}{\omega_1}\\
  &= \frac{2}{\omega_1}X^{-\frac{1}{2}} + \cO(X^{-\frac{5}{2}})\\
\intertext{(where we have used the periodicity of $\wp$ modulo $\Lambda$ and the facts about the Laurent expansion of $\wp$ around $0$ which were mentioned in section \ref{background}), and similarly}
  \lim_{n\to\infty}\frac{1}{2n}\#\{|k|<n\,:\,y(kP) > Y\} &= \frac{2^{\frac{1}{3}}}{\omega_1}Y^{-\frac{1}{3}} + \cO(Y^{-\frac{5}{3}}),
\end{align*}
~\\
where the implied constants depend only on $E$.\\
\\
\\
We now move to discussion of corollary \ref{density}. This corollary is useful because it gives a description of the distribution of $nP$ in a way which does not depend on knowledge of the function $\wp$.
\begin{proof}[Proof of corollary \ref{density}] Let $z_0$ be the preimage of $P_0$ under the map $z \mapsto (\wp(z),\wp'(z))$, and let $\Delta u$ be a real number. Then
$$(\wp(z_0 + \Delta u), \wp'(z_0 + \Delta u)) = \Big(\wp(z_0) + \wp'(z_0)\Delta u + \cO((\Delta u)^2),\;\; \wp'(z_0) + \wp''(z_0)\Delta u + \cO((\Delta u)^2)\Big).$$
Here the implied constant depends on both $E$ and $P_0$. From the equation of $E$ we can deduce that $\wp''(z_0) = 6\wp(z_0)^2 - \frac{g_2}{2}$. Using this fact and the preceding equation, we see that the point $(\wp(z_0 + \Delta u), \wp'(z_0 + \Delta u))$ will satisfy $(\wp(z_0+\Delta u) - x_0)^2 + (\wp'(z_0 + \Delta u) - y_0)^2 < \eps^2$ if and only if
$$|\Delta u| < \frac{\eps}{\sqrt{y_0^2 + \left(6x_0^2 - \frac{g_2}{2}\right)^2}}\left(1 - (\wp'(z_0) + \wp''(z_0))\frac{\cO((\Delta u)^3)}{\eps^2}\right).$$
Then, after using theorem \ref{dist} and noting that $\cO((\Delta u)^3)\eps^{-2} = \cO(\eps)$, we can conclude the result.\end{proof}
~\\
The following variation of corollary \ref{density} will simplify calculations in sections \ref{spacingsection} and \ref{bmsection}.
\begin{lemma}\label{diffdensity}
  Let $I$ be an open interval of length $\eps$ which is contained in the set of $x$-coordinates of points in $E(\R)$. For any $P \in E(\R)$ of infinite order, the natural density of integers $n$ for which $x(nP) \in I$ is proportional to
  $$\eta \cdot \frac{1}{\sqrt{4x^3 - g_2 x - g_3}}\cdot \eps + \cO(\eps^2),$$
  where $\eta = 1$ if $P$ is on the unbounded component of $E(\R)$ and $I$ is contained in the set of $x$-coordinates of points on the unbounded component of $E(\R)$, $\eta = \frac{1}{2}$ if $P$ is on the bounded component of $E(\R)$, and $\eta = 0$ otherwise. The implied constant depends only on $E$ and $I$.
\end{lemma}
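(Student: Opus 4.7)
The plan is to apply Theorem \ref{dist} with $U = I \times \R$, which reduces the lemma to computing
$$\frac{\mu\bigl(\{z \in I_P : \wp(z) \in I\}\bigr)}{\mu(I_P)}$$
to first order in $\eps$. The denominator is either $\omega_1$ or $2\omega_1$ depending on whether $P$ lies on the unbounded or bounded component, so everything reduces to a local first-order expansion of $\wp$ at the preimages of a representative point of $I$, together with a count of how many such preimages actually lie in $I_P$.

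For the local expansion, let $x_0$ be the midpoint of $I$. At any $z \in I_P$ with $\wp(z) = x_0$, the Taylor expansion $\wp(z + \Delta) = x_0 + \wp'(z)\Delta + \cO(\Delta^2)$ combined with the defining identity $\wp'(z)^2 = 4x_0^3 - g_2 x_0 - g_3$ shows that the preimage of $I$ in a neighborhood of $z$ is an interval of length $\eps/\sqrt{4x_0^3 - g_2 x_0 - g_3} + \cO(\eps^2)$. Summing this contribution over all preimages in $I_P$ yields the numerator.

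The case analysis — essentially the only nontrivial step — then determines $\eta$. From the observations in Section \ref{background}, $\wp$ sends $[0,\omega_1]$ onto the unbounded component of $E(\R)$ as a $2$-to-$1$ map (via the evenness $\wp(z) = \wp(\omega_1 - z)$) and sends $[0,\omega_1] + \omega_2/2$ onto the bounded component as a $2$-to-$1$ map. Hence: if $P$ is on the unbounded component and $I$ lies on the unbounded component, the numerator equals $2\eps/\sqrt{4x_0^3 - g_2 x_0 - g_3} + \cO(\eps^2)$ and the denominator equals $\omega_1$, giving $\eta = 1$; if $P$ is on the bounded component, the denominator doubles to $2\omega_1$ while the numerator still picks up $2\eps/\sqrt{4x_0^3 - g_2 x_0 - g_3}$ from whichever piece of $I_P$ matches the component of $I$, giving $\eta = 1/2$; and if $P$ is on the unbounded component but $I$ lies on the bounded component, the numerator vanishes, giving $\eta = 0$. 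The overall proportionality constant is $2/\omega_1$, and the main (indeed only) obstacle is the bookkeeping for which half of $\C/\Lambda$ maps to which real component.
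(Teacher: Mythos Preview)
Your argument is correct, but it is not the route the paper takes for its main proof. The paper deduces Lemma \ref{diffdensity} from Corollary \ref{density}: it writes $I = (x_0 - \tfrac{\eps}{2}, x_0 + \tfrac{\eps}{2})$, integrates the $\eps$-ball density of Corollary \ref{density} along the curve against the arc-length factor $\sqrt{1 + y'(x)^2}$, and then simplifies the resulting expression $\bigl[(1 + y'(x_0)^2)/(y(x_0)^2 + (6x_0^2 - g_2/2)^2)\bigr]^{1/2}$ algebraically using $y' = (12x^2 - g_2)/(2y)$ to recover $1/\sqrt{4x_0^3 - g_2 x_0 - g_3}$. You instead bypass Corollary \ref{density} entirely and go back to Theorem \ref{dist}, pulling $I \times \R$ back under $\wp$ and reading off the preimage length from $\wp'(z)^2 = 4\wp(z)^3 - g_2 \wp(z) - g_3$; this is exactly the observation $dx/y = dz$ that the paper mentions immediately after its proof as the basis for a ``less elementary but shorter'' alternative. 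Your approach is cleaner and avoids the algebraic simplification, at the cost of having to redo the component bookkeeping for $\eta$ that the paper had already packaged into Corollary \ref{density}; the paper's approach has the virtue of exhibiting Lemma \ref{diffdensity} as a direct consequence of the preceding corollary.
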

\begin{proof}
  Write $I = \left(x_0 - \frac{\eps}{2}, x_0 + \frac{\eps}{2}\right)$. Corollary \ref{density} implies that the natural density of integers $n$ for which $x(nP) \in I$ is proportional to
  $$c\int_{x\in I}\eta\frac{\sqrt{1 + y'(x)^2}}{\sqrt{y(x)^2 + \left(6x^2 - \frac{g_2}{2}\right)^2}}\,dx = c\eps\eta\cdot\left[\frac{1 + y'(x_0)^2}{y(x_0)^2 + \left(6x_0^2 - \frac{g_2}{2}\right)^2}\right]^{\frac{1}{2}} + \cO\!\left(\eps^2\frac{d}{dx_0}\left[\frac{1 + y'(x_0)^2}{y(x_0)^2 + \left(6x_0^2 - \frac{g_2}{2}\right)^2}\right]^{\frac{1}{2}}\right),$$
  where $y(x) = \sqrt{4x^3 - g_2x - g_3}$ and $c$ is a normalization constant. The claim then follows from $y'(x) = \frac{12x^3 - g_2}{2y(x)}$ and straightforward calculation.
\end{proof}
\noindent It may be of interest to note that a less elementary but shorter proof of lemma \ref{diffdensity} can be formulated based on the observation that
$$\frac{dx}{y} = \frac{d(\wp(z))}{\wp'(z)} = \frac{\wp'(z)dz}{\wp'(z)} = dz.$$
~\\
The question of how quickly the set of multiples $nP$ will converge to the limiting density has been studied extensively in the theory of Diophantine approximation. See \cite{kn74} chapter 2, section 3 for an overview. For a point $P$ and an open set $U \subseteq \R^2$, let $r: \Z_{>0} \to \R$ be the function which satisfies
$$\#\{|k| < n \,:\, kP \in U\} = \rho n + r(n),$$
where $\rho$ is the natural density of multiples of $P$ which lie in the set $U$, as given by corollary \ref{density}. Then, for general $P$, it is not possible to give a better bound on $r(n)$ than $o(n)$, but for all but a set of points $P$ of measure $0$, we have $r(n) = \cO(n^{\frac{1}{2} + \eps})$ for every $\eps > 0$.\\
\section{Spacing}\label{spacingsection}
We can also study the statistics of the distances between the points $nP$ and $nP + Q$ for any fixed $Q$ in $E(\R)$. The raw moments of the distribution of distances diverge as more and more multiples of a fixed point $P$ are taken, as described in corollary \ref{spacingmomentslower}, and an upper bound for their growth in the number of multiples taken is given in corollary \ref{spacingmomentasymptotic}. We can, however, still find a distribution for these differences, as done in corollary \ref{spacing}.
\begin{corollary}\label{spacingmomentslower}
  For any points $P$ and $Q$ in $E(\R)$ and any positive integer $r$, the limits
  $$\lim_{n\to\infty}\frac{1}{n}\sum_{k=1}^n\Big(x(kP+Q) - x(kP)\Big)^r \quad\quad\text{and}\quad\quad \lim_{n\to\infty}\frac{1}{n}\sum_{k=1}^n\Big(y(kP+Q) - y(kP)\Big)^r$$
  diverge.
\end{corollary}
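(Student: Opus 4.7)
The plan is to argue by contradiction using Hurwitz's theorem (lemma \ref{hurwitz}). Let $a_k$ denote either $(x(kP+Q)-x(kP))^r$ or $(y(kP+Q)-y(kP))^r$, and set $S_n := \sum_{k=1}^n a_k$. If $S_n/n$ converged to a finite real number $L$, then writing $\varepsilon_n := S_n/n - L$ one has $\varepsilon_n \to 0$ and
$$a_n = S_n - S_{n-1} = n\varepsilon_n - (n-1)\varepsilon_{n-1} + L,$$
which forces $|a_n| = o(n)$. It therefore suffices to exhibit infinitely many $k$ for which $|a_k|$ grows at least linearly in $k$, since that will contradict the $o(k)$ bound and rule out convergence of the average to any finite value.

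Let $\alpha := z_P/\omega_1$ if $P$ lies on the unbounded component of $E(\R)$, and $\alpha := 2z_P/\omega_1$ if $P$ lies on the bounded component; in either case $\alpha$ is a real number (modulo $\Z$) which is irrational because $P$ has infinite order. Applying Hurwitz's theorem to $\alpha$ produces infinitely many positive integers $m$ with $\{m\alpha\} < \frac{1}{\sqrt{5}\,m}$, which we re-interpret as: there exist infinitely many $k$ (taking $k=m$ in the first case and $k=2m$ in the second) with $\left\{k\,z_P/\omega_1\right\} \ll 1/k$. By the Laurent expansions of $\wp$ and $\wp'$ at their poles recorded in section \ref{background}, this yields $x(kP) \gg k^2$ and $|y(kP)| \gg k^3$ along this subsequence. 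Meanwhile, for these same $k$, the point $z_{kP}+z_Q$ remains a bounded distance from the lattice $\Lambda$ (this is where we use $Q \neq O$, which is implicit since otherwise all $a_k$ vanish), so $\wp(z_{kP}+z_Q)$ and $\wp'(z_{kP}+z_Q)$ are bounded. Consequently
$$|x(kP+Q)-x(kP)|^r \gg k^{2r} \geq k^2 \quad\text{and}\quad |y(kP+Q)-y(kP)|^r \gg k^{3r} \geq k^3$$
along the subsequence, neither of which is $o(k)$ for $r \geq 1$. This contradicts the finiteness of the limit.

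The principal thing to be careful about is the bounded-component case, where the Hurwitz approximation only applies directly to $2z_P/\omega_1$ and the resulting $k$'s are restricted to even integers; however, the bounds $|a_k| \gg k^{2r}$ or $|a_k| \gg k^{3r}$ degrade only by a constant factor under this restriction, so the argument still produces the required infinite subsequence violating $o(k)$. I do not anticipate any further obstacle.
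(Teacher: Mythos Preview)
Your argument is correct and takes a genuinely different route from the paper's. The paper argues by contradiction via theorem \ref{dist}: if the Ces\`aro averages converged, equidistribution would force the limits to equal $\frac{1}{\mu(I_P)}\int_{I_P}(\wp(z+z_Q)-\wp(z))^r\,dz$ and $\frac{1}{\mu(I_P)}\int_{I_P}(\wp'(z+z_Q)-\wp'(z))^r\,dz$, and these integrals diverge because of the poles of $\wp$. Your approach bypasses equidistribution entirely: you use the elementary fact that convergence of $S_n/n$ forces $a_n = o(n)$, and then Hurwitz's theorem supplies infinitely many $k$ with $|a_k| \gg k^{2r}$ (or $k^{3r}$), which violates that bound.

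What each approach buys: the paper's argument is conceptually tidy and ties the result directly into the equidistribution framework of section \ref{distsection}, but its appeal to theorem \ref{dist} for an unbounded integrand is somewhat informal---equidistribution in the usual sense controls averages of bounded (or Riemann-integrable) functions, and extending it to functions with poles requires an extra truncation step that the paper does not spell out. Your argument is more self-contained and arguably more rigorous as written, since it only needs the Laurent expansion of $\wp$ near its poles and lemma \ref{hurwitz}, both of which are already established in section \ref{background}. One small wording slip: when you say $z_{kP}+z_Q$ ``remains a bounded distance from the lattice'' you mean it remains bounded \emph{away} from the lattice; the intended meaning is clear from the next clause, but it is worth fixing.
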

\begin{proof} Suppose these limits did converge. Let $z_P$ and $z_Q$ be the preimages of $P$ and $Q$ under the parameterization $z \mapsto (\wp(z),\wp'(z))$. Let $\omega_1$ and $\omega_2$ be the periods of $E/\C$, chosen such that $\omega_1 \in \R_{>0}$ and $\text{Im}(\omega_2) > 0$. Let $\Lambda$ be the lattice in $\C$ with basis $\langle \omega_1, \omega_2\rangle$. Define $I_P \subset \C/\Lambda$ as follows:
  \begin{align*}
    I_P := \begin{cases}[0,\omega_1], &\text{Im}(z_P) = 0 \mod\Lambda,\\
      [0,\omega_1]\cup \left([0,\omega_1]+\frac{\omega_2}{2}\right), &\text{Im}(z_P)=\text{Im}\!\left(\frac{\omega_2}{2}\right) \mod\Lambda,\end{cases}
  \end{align*}
  where $[0,\omega_1]$ denotes the interval of real numbers. Then theorem \ref{dist} implies that these limits would be equal to
  $$\frac{1}{\mu(I_P)}\int_{I_P}\Big(\wp(z+z_Q) - \wp(z)\Big)^r \,dz\quad\quad\text{and}\quad\quad \frac{1}{\mu(I_P)}\int_{I_P}\Big(\wp'(z+z_Q) - \wp'(z)\Big)^r \,dz,$$
  but both of these diverge because $\wp(z)$ has poles of order $2$ at $0$ and $\omega_1$.
\end{proof}
\begin{corollary}\label{spacingmomentasymptotic}
  Fix a point $Q$ in $E(\R)$ and a positive integer $r$. Then
  $$\sum_{k=1}^n\Big(x(kP+Q) - x(kP)\Big)^r \ll n^{2r+1+\eps}$$
  and
  $$\sum_{k=1}^n\Big(y(kP+Q) - y(kP)\Big)^r \ll n^{3r+1+\eps}$$
  for all points $P$ in $E(\R)$ except for a set of measure $0$, and all $\eps > 0$.
\end{corollary}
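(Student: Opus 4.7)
The plan is to bound each summand $|x(kP+Q) - x(kP)|$ and $|y(kP+Q) - y(kP)|$ pointwise in $k$ for almost every $P$, and then sum. Setting $\alpha := z_P/\omega_1$ and $\beta := z_Q/\omega_1$, the Laurent estimates recalled in section \ref{background} give
$$|x(kP)| \ll \{k\alpha\}^{-2}, \qquad |x(kP+Q)| \ll \{k\alpha+\beta\}^{-2},$$
together with the analogous cubic bounds with exponents $-3$ for $|y(kP)|$ and $|y(kP+Q)|$, where the implied constants depend only on $E$ and $Q$. Thus it suffices to bound $\{k\alpha\}$ and $\{k\alpha+\beta\}$ from below on a full-measure set of $P$.

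For the homogeneous lower bound I would set $\psi(k) = k^{1+\delta}$ with $\delta>0$ small, so that $\sum \psi(k)^{-1}$ converges; lemma \ref{khinchin} then yields $\{k\alpha\}\geq k^{-1-\delta}$ for almost every $\alpha$ and all sufficiently large $k$. For the inhomogeneous bound $\{k\alpha+\beta\}\geq k^{-1-\delta}$ I would argue directly via Borel--Cantelli: the set $\{\alpha\in[0,1]:\|k\alpha+\beta\|<\psi(k)^{-1}\}$ has Lebesgue measure $2\psi(k)^{-1}$, independently of $\beta$, and convergence of $\sum 2\psi(k)^{-1}$ gives that this inequality fails only finitely often for almost every $\alpha$. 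Intersecting the two null sets preserves full measure.

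Combining these lower bounds with the Laurent estimates gives $|x(kP)|,|x(kP+Q)|\ll k^{2+2\delta}$ and $|y(kP)|,|y(kP+Q)|\ll k^{3+3\delta}$ for all $k$ sufficiently large, hence
$$\sum_{k=1}^n |x(kP+Q)-x(kP)|^r \ll n\cdot n^{r(2+2\delta)} = n^{2r+1+2r\delta},$$
and analogously the $y$-sum is $\ll n^{3r+1+3r\delta}$. Taking $\delta=\eps/(2r)$ for the $x$-bound and $\delta=\eps/(3r)$ for the $y$-bound recovers the claimed estimates.

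The main obstacle I anticipate is the inhomogeneous Diophantine input, since lemma \ref{khinchin} as stated only addresses $\{k\alpha\}$ rather than $\{k\alpha+\beta\}$. The Borel--Cantelli sketch above handles this cleanly because $\alpha\mapsto k\alpha+\beta\bmod 1$ preserves Lebesgue measure on $[0,1]$, but one has to verify that the measure estimate really is uniform in the fixed shift $\beta$. A minor secondary point is that if $P$ lies on the bounded component then $z_P$ is not real modulo $\Lambda$; however, the same Laurent expansion applies after passing from $P$ to $2P$, exactly as in the proof of theorem \ref{absbound}, so the overall conclusion is unchanged.
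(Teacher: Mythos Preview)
Your proof is correct and follows essentially the same route as the paper, whose proof is the single line ``Apply theorem~\ref{typical} to $\psi(n)=n^{1+\eps}$.'' Your explicit Borel--Cantelli argument for the inhomogeneous term $\{k\alpha+\beta\}$ is exactly what is needed to make that one-liner rigorous for the $x(kP+Q)$ and $y(kP+Q)$ contributions, which theorem~\ref{typical} alone does not literally cover.
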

\begin{proof} Apply theorem \ref{typical} to $\psi(n) = n^{1+\eps}$.\end{proof}
~\\
Using theorem \ref{dist} we can conclude immediately that, for any $d \in \R$ and any $\eps > 0$,
$$\lim_{n\to\infty}\frac{1}{2n}\#\left\{|k| < n \,:\, \left|x(nP + Q) - x(nP) - d\right| < \eps\right\} = \frac{1}{\mu(I_P)}\mu\left(\left\{z\in I_P \,:\, \left|\wp(z+z_Q) - \wp(z) - d\right| < \eps\right\}\right).$$
Here we are using the notation from the proof of corollary \ref{spacingmomentslower} above. However, it is possible to write down the distribution of the spacings of these coordinates while avoiding making reference to the function $\wp(z)$. This is the content of corollary \ref{spacing}, which we now prove.
\begin{proof}[Proof of corollary \ref{spacing}.] Given an elliptic curve $E: y^2 = x^3 + ax + b$, a fixed point $Q = (x_Q,y_Q) \in E(\R)$, and a point $P = (x_P,y_P) \in E(\R)$ different from $Q$, we can compute directly using the chord and tangent law for addition on $E$ that
  $$x(P+Q) - x(P) = \left(\frac{y_P - y_Q}{x_P - x_Q}\right)^2 - 2x_P - x_Q.$$
  Fix $d \in \R$ and $\eps > 0$. We now wish to find the set of points $P$ for which $x(P+Q) - x(P) \in (d - \eps, d + \eps) \subset \R$. Substituting $y_P = \pm\sqrt{x_P^3 + ax_P + b}$, the condition we're interested in becomes
  $$\left|\,\left(\frac{\pm\sqrt{x_P^3 + ax_P + b} - y_Q}{x_P - x_Q}\right)^2 - 2x_P - x_Q - d\,\right| < \eps.$$
  Define
  $$F_{\pm,Q}(x) := \left(\frac{\pm\sqrt{x^3 + ax + b} - y_Q}{x - x_Q}\right)^2 - 2x - x_Q.$$
  Let $x^+_1,\dots,x^+_{k^+}$ and $x^-_1,\dots,x^-_{k^-}$ denote the real numbers which solve the equation $F_{\pm,Q}(x) = d$. Then, by considering the Taylor series expansion of $F_{\pm,Q}(x)$ around $x^\pm_i$ for $i = 1,\dots,k^\pm$, we see that whenever
  $$x^\pm_i - \frac{\eps}{F'_{\pm,Q}(x^\pm_i)} + \cO(\eps^2) \,\,< x <\,\, x^\pm_i + \frac{\eps}{F'_{\pm,Q}(x^\pm_i)} + \cO(\eps^2)$$
  we will have $|F_{\pm,Q}(x) - d| < \eps$.\\
  \\
  For any fixed $P_0 \in E(\R)$ of infinite order, lemma \ref{diffdensity} allows us to find the natural density of integers $n$ for which $x(nP_0)$ satisfies the condition $|F_{+,Q}(x(nP_0)) - d| < \eps$ or $|F_{-,Q}(x(nP_0)) - d| < \eps$. Define
  $$\rho(x) := \frac{1}{\sqrt{x^3 + ax + b}}.$$
  Then the values $x(nP_0)$ will have a distribution proportional to $\eta\cdot\rho(x)$, where $\eta = 1$ if $x$ is the $x$-coordinate of a point in the unbounded component of $E(\R)$ and $P_0$ is on the unbounded component of $E(\R)$, $\eta = \frac{1}{2}$ if $P_0$ is on the bounded component of $E(\R)$, and $\eta = 0$ otherwise. Hence, for fixed $\eps$, the natural density of integers $n$ for which $x(nP + Q) - x(nP) \in (d - \eps, d + \eps)$, as a function of $d$, is proportional to
  $$\sum_{i = 1}^{k^+}\!{\vphantom{\sum}}^*\frac{\rho(x^+_i)}{F'_{+,Q}(x^+_i)} + \cO\!\left(\eps\!\left((x^+_i)^3 + ax^+_i + b\right)^{-\frac{3}{2}}\right) + \sum_{i = 1}^{k^-}\!{\vphantom{\sum}}^*\frac{\rho(x^-_i)}{F'_{-,Q}(x^-_i)} + \cO\!\left(\eps\!\left((x^-_i)^3 + ax^-_i + b\right)^{-\frac{3}{2}}\right),$$
  where $\sum^*$ indicates that, if $P$ is on the unbounded component of $E(\R)$, then the sum omits the $i$ for which $x^\pm_i$ is not the $x$-coordinate of any point on the unbounded component of $E(\R)$.
\end{proof}
~\\
\noindent Informally, we can view the distribution $f(d)$ from theorem \ref{spacing} as
$$f(d) = \sum\rho\!\left(F_{+,Q}^{-1}(d)\right)\left(F_{+,Q}^{-1}\right)'\!\!(d) + \sum\rho\!\left(F_{-,Q}^{-1}(d)\right)\left(F_{-,Q}^{-1}\right)'\!\!(d),$$
where the sums are taken over the $k^\pm$ ``reasonable choices'' of the pair of values $\left(F_{\pm,Q}^{-1}(d),\,\left(F_{\pm,Q}^{-1}\right)'\!\!(d)\right)$.\\
~\\
\section{An equation of Bremner and Macleod}\label{bmsection}
In \cite{bm14}, Bremner and Macleod give positive integer solutions $a,b,c$ to the equation
\begin{equation}\tag{\ref{fruit}}
  \frac{a}{b+c}+\frac{b}{a+c}+\frac{c}{a+b} = N,
\end{equation}
where $N$ is an integer. Bremner and Macleod show that solutions to this equation are in bijection with rational points on the elliptic curve $E_N: Y^2 = X^3 + (4N^2 + 12N - 3)X^2 + 32(N+3)X$ with $X$-coordinate satisfying either
\begin{equation}\label{bmcond1}
  \frac{3 - 12N - 4N^2 - (2N+5)\sqrt{4N^2 + 4N - 15}}{2} < X < -2(N+3)\left(N + \sqrt{N^2 - 4}\right)
\end{equation}
or
\begin{equation}\label{bmcond2}
  -2(N+3)\left(N - \sqrt{N^2 - 4}\right) < X < -4\frac{N+3}{N+2}.
\end{equation}
Theorem \ref{dist} implies that for any $P \in E_N(\Q)$ of infinite order on the bounded connected component of $E_N(\R)$, the point $nP$ will correspond to a positive integer solution to (\ref{fruit}) for a certain specific proportion of integers $n$ in the sense of natural density. Writing down what this specific proportion is for general $N$ can be done using corollary \ref{density}. We first establish some notation.\\
\\
For brevity, define
$$A := 4N^2 + 12N - 3 \quad\quad\text{and}\quad\quad B := 32(N+3).$$
The curve $E_N$ is isomorphic to the curve $E'_N: y^2 = x^3 + ax + b$, where
$$a := B - \frac{A^2}{3} \quad\quad\text{and}\quad\quad b := \frac{2A^3}{27} - \frac{AB}{3},$$
via the transformation $y = Y$, $x = X + \frac{A}{3}$.\\
\\
By lemma \ref{diffdensity}, the function
$$\rho(x) := \frac{1}{\sqrt{x^3+ax+b}}$$
has the property that, for any $P \in E'_N(\Q)$, the distribution of $x(nP)$ will be proportional to $\eta\cdot \rho(x)$, where $\eta$ is $0$ if $x$ is the $x$-coordinate of a point in the bounded component of $E'_N(\R)$ and $P$ is in the unbounded component of $E'_N(\R)$, and $1$ otherwise.\\
\\
For brevity, define
\begin{align*}
  &x_{1,\text{left}} := \frac{3 - 12N - 4N^2 - (2N+5)\sqrt{4N^2 + 4N - 15}}{2} + \frac{A}{3}, &&x_{1,\text{right}} := -2(N+3)(N + \sqrt{N^2 - 4}) + \frac{A}{3},\\
  &x_{2,\text{left}} := -2(N+3)(N - \sqrt{N^2 - 4}) + \frac{A}{3}, &&x_{2,\text{right}} := -4\frac{N+3}{N+2} + \frac{A}{3}.
\end{align*}
These are the left and right edges of the intervals that $x$ can lie in if $X$ is to satisfy either condition (\ref{bmcond1}) or condition (\ref{bmcond2}). As explained in \cite{bm14}, all points on $E'_N$ which will yield a solution to (\ref{fruit}) are on the bounded component of $E'_N$. The $X$-coordinates of the points in the bounded component of $E_N(\R)$ form the interval $\left[\frac{-A-\sqrt{A^2 - 4B}}{2},\frac{-A+\sqrt{A^2 - 4B}}{2}\right]$, so the natural density of integers $n$ for which $nP$ solves (\ref{fruit}) is
$$\frac{\int_{x_{1,\text{left}}}^{x_{1,\text{right}}} \frac{dx}{\sqrt{x^3+ax+b}} + \int_{x_{2,\text{left}}}^{x_{2,\text{right}}} \frac{dx}{\sqrt{x^3+ax+b}}}{2\int_{\frac{-A - \sqrt{A^2-4B}}{2} + \frac{A}{3}}^{\frac{-A + \sqrt{A^2-4B}}{2} + \frac{A}{3}} \frac{dx}{\sqrt{x^3+ax+b}}}.$$
Noting that the integrand is the holomorphic differential $\omega$ mentioned in the introduction allows us to write this density more succinctly as
$$\frac{\int_{x_{1,\text{left}}}^{x_{1,\text{right}}} \omega + \int_{x_{2,\text{left}}}^{x_{2,\text{right}}} \omega}{2\omega_1}.$$
For $N = 4$, for example, this proportion is approximately $0.068$, while for $N = 38$ the proportion is approximately $0.003$.\\
\\
Bremner and Macleod consider the possibility that multiples of a generator $P$ of $E_N(\Q)$ could be uniformly distributed (with respect to the Lebesgue measure) on the bounded component of $E_N(\R)$. Corollary \ref{density} gives the exact distribution, and this distribution happens to be far from uniform in this case. The bounded component of $E_N$ has diameter $\cO(N^2)$, and, as $N$ goes to infinity, the section of the bounded component corresponding to condition (\ref{bmcond1}) has arclength $8N^2 + o(N^2)$ and the sections corresponding to (\ref{bmcond2}) have arclength totaling $32 + o(1)$. However, numerical experimentation suggests that
$$\int_{x_{1,\text{left}}}^{x_{1,\text{right}}} \omega = \frac{1}{2}\int_{x_{2,\text{left}}}^{x_{2,\text{right}}} \omega$$
for all $N$.
\\
\bibliographystyle{plain}
\bibliography{realpointsbib}{}

\end{document}